\newtheorem{theorem}{Theorem}[section]
\newtheorem*{theorem*}{Theorem}
\newtheorem{lemma}[theorem]{Lemma}
\newtheorem*{lemma*}{Lemma}
\newtheorem*{proposition*}{Proposition}
\newtheorem{corollary}[theorem]{Corollary}
\newtheorem*{corollary*}{Corollary}
\newtheorem*{claim*}{Claim}
\newtheorem*{fact*}{Fact}
\newtheorem*{conjecture*}{Conjecture}
\theoremstyle{definition}
\newtheorem{definition}[theorem]{Definition}
\newtheorem*{definition*}{Definition}
\newtheorem*{example*}{Example}
\newtheorem{remark}[theorem]{Remark}
\newtheorem*{remark*}{Remark}
\newtheorem{question}[theorem]{Question}
\newtheorem*{question*}{Question}
\newtheorem*{assumption*}{Assumption}
\numberwithin{equation}{section}
\DeclareMathOperator{\id}{id}
\DeclareMathOperator{\bnd}{bnd}
\newcommand{\R}{{\mathbb R}}
\newcommand{\Z}{{\mathbb Z}}
\newcommand{\N}{{\mathbb N}}
\address{graduate school of mathematical sciences, the university of tokyo, 3-8-1 komaba, meguro, tokyo 153-8914, japan.}
\email{sakamoto-kohki571@g.ecc.u-tokyo.ac.jp}
\subjclass[2020]{32V05}
\begin{document}
\title{Dimensions of harmonic measures in percolation clusters on hyperbolic groups}
\author{Kohki Sakamoto}
\date{}
\maketitle
\begin{abstract}

For the simple random walks in percolation clusters on hyperbolic groups, we show that the associated harmonic measures are exact dimensional and their Hausdorff dimensions are equal to the entropy over the speed. Our method is inspired by cluster relations introduced by Gaboriau and applies to a large class of random environments on the groups.

\end{abstract}
\section{Introduction}

\subsection{Background and main results}

A random walk on a word hyperbolic group determines the hitting measure on the Gromov boundary if it has positive speed (cf. \cite{MR1815698}). This hitting measure is called the \textit{harmonic measure} of the random walk. One of the main subjects in the study of such harmonic measures is to establish properties called exact dimensionality and the dimension formula. Let us explain about these terms. In the rest of this section, $\Gamma$ denotes a nonelementary hyperbolic group, and $G$ denotes its Cayley graph with respect to some generating set of $\Gamma$. For a Borel measure $\nu$ on the boundary $\partial G$, its \textit{upper Hausdorff dimension} is defined by 
\[
\dim \nu = \underset{\xi \in \partial G}{\nu\text{-}\sup} \liminf_{r \to 0} \frac{\log \nu(B(\xi, r))}{\log r},
\]
where ``$\nu\text{-}\sup$" indicates the essential supremum with respect to $\nu$, and $B(\xi, r)$ denotes the ball centered at $\xi$ with radius $r$. Informally speaking, it shows the degree of the fractalness of $\nu$. We say that a harmonic measure $\nu$ is \textit{exact dimensional} if
\[
\dim \nu = \lim_{r \to 0} \frac{\log \nu(B(\xi, r))}{\log r}  
\]
holds for $\nu$-almost every $\xi$. Further, we say that $\nu$ satisfies the \textit{dimension formula} if $\dim \nu$ is equal to $h/l$, where $h$ and $l$ are the entropy and the speed of the random walk that determines $\nu$, respectively (see Section 2 for the definition). It has been proved that the harmonic measure of a random walk driven by a single measure on $\Gamma$ is exact dimensional and satisfies the dimension formula, under several types of conditions \cite{MR1832436}, \cite{MR2286061}, \cite{MR2919980}, \cite{MR3893268}. Such results can be used to show that the harmonic measure is singular to another measure on the boundary defined geometrically (see \cite{MR2286061}, \cite{MR2919980}, and \cite{MR4243517}). \\
\indent On the other hand, there are only a few works concerning harmonic measures associated with random walks in random environments. Lyons, Pemantle, and Peres showed that the harmonic measure of the simple random walk on a supercritical Galton-Watson tree is exact dimensional and satisfies the dimension formula in their influential paper \cite{MR1336708}. The notable work that inspires our study is the paper by Kaimanovich \cite{MR1631732}, where he established the exact dimensionality and the dimension formula for a large class of Markov chains on trees, and it can be applied to random walks in stationary random environments on free groups. \\
\indent Our main result is a generalization of results in \cite{MR1631732}. Namely, we establish exact dimensionality and the dimension formula for random walks in percolation clusters and random conductance models on hyperbolic groups. Here we focus on percolation clusters for simplicity. For $p \in [0, 1]$, the \textit{Bernoulli percolation} $\mu_{p}$ is defined as the random subgraph of $G$ obtained by independently retaining or deleting each edge with probability $p$ or $1-p$, respectively. The Bernoulli percolation $\mu_{p}$ is called \textit{supercritical} if $\mu_{p}$-almost every $\omega \subset G$ has a connected component with infinite vertices. For supercritical $\mu_{p}$, we consider the simple random walk on the connected component of $\omega$ containing the identity $1$ (denoted by $C_{\omega}(1)$), conditioned on $C_{\omega}(1)$ being infinite. Note that the event of $C_{\omega}(1)$ being infinite, denoted by $\Omega_{1}$, is $\mu_{p}$-positive since $\mu_{p}$ is supercritical and $\Gamma$-invariant. Benjamini, Lyons, and Schramm initiated the study of such random walks in percolation clusters on general Cayley graphs in \cite{MR1802426}. They proved that the entropy and the speed (denoted by $h$ and $l$, respectively) of such random walks are deterministic, i.e., they depend only on $p$. Further, they showed that $h, l >0$ if the group is nonamenable. Therefore, for supercritical $\mu_{p}$ on $G$, we can define the harmonic measure associated with the simple random walk on $C_{\omega}(1)$ for $\mu_{p}$-almost every $\omega$ (recall that every nonelementary hyperbolic group is nonamenable). Based on the ergodic-theoretic approach in \cite{MR1802426}, we establish the exact dimensionality and the dimension formula for the simple random walks in supercritical Bernoulli percolation clusters. 

\begin{theorem}\label{intromainBernolli}
    Let $\mu_{p}$ be a supercritical Bernoulli percolation on $G$. Then, for $\mu_{p}$-almost every $\omega \in \Omega_{1}$, letting $\nu_{\omega}$ be the harmonic measure on $\partial G$ determined by the simple random walk on $C_{\omega}(1)$ starting from 1, we have
    \[
    \dim \nu_{\omega} = \lim_{r \to 0} \frac{\log \nu_{\omega}(B(\xi, r))}{\log r} = \frac{h}{l}
    \]
    for $\nu_{\omega}$-almost every $\xi \in \partial G$. In particular, $\dim \nu_{\omega}$ is positive and constant for $\mu_{p}$-almost every $\omega \in \Omega_{1}$.
\end{theorem}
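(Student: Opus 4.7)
The plan is to extend Kaimanovich's strategy from \cite{MR1631732} for random walks on trees to random walks in percolation clusters on hyperbolic groups, using Gaboriau's cluster equivalence relation as the substitute for group equivariance in the random environment.

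First I would build a stationary dynamical system encoding both the environment and the walk. Let $\hat\mu_p$ be the probability measure on $\Omega_1$ obtained by biasing $\mu_p|_{\Omega_1}$ by the cluster-degree $\deg_\omega(1)$ and renormalizing. Under $\hat\mu_p$ the chain of environments from the walker, $\omega_n := X_n^{-1}\omega$, is stationary, and the arguments of \cite{MR1802426} give its ergodicity. On the path space $\Omega^{\ast} := \Omega_1 \times \Gamma^{\N}$, equipped with the shift $T(\omega,(X_n)_{n\geq 0}) = (X_1^{-1}\omega,(X_1^{-1}X_{n+1})_{n\geq 0})$ and with the joint law $\mathbb{P}$ of environment and SRW, $T$ is measure-preserving and ergodic. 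Gaboriau's cluster relation is precisely the device that makes the ``change of root'' inside each cluster measure-preserving for $\hat\mu_p$, and this is what legitimizes treating $T$ as a measure-preserving transformation despite the absence of a global $\Gamma$-action on $C_\omega(1)$.

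Next I would combine two ergodic inputs. A Shannon--McMillan--Breiman theorem applied to $(T,\mathbb{P})$, with the natural partition recording the next walk step, gives
\[
-\frac{1}{n}\log p_n^{\omega}(1,X_n) \xrightarrow[n\to\infty]{} h \quad \mathbb{P}\text{-a.s.},
\]
where $p_n^{\omega}$ is the $n$-step transition kernel of the SRW on $C_\omega(1)$ and $h$ is the deterministic asymptotic entropy of \cite{MR1802426}. Kingman's subadditive ergodic theorem gives $d(1,X_n)/n \to l$ with deterministic $l>0$. Third comes the geometric conversion on $\partial G$: the shadow $\mathcal{O}_1(X_n)$ of $X_n$ seen from $1$ is a quasi-ball on $\partial G$ of visual diameter $\asymp e^{-d(1,X_n)}$, and the walk almost surely converges to some $\xi_\infty \in \partial G$ whose law is $\nu_\omega$. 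An Ancona-type comparison inside the cluster, derived from reversibility of SRW and a bottleneck argument, would give $\nu_\omega(\mathcal{O}_1(X_n)) \asymp p_n^{\omega}(1,X_n)$ up to subexponential corrections, so that $-\frac{1}{n}\log \nu_\omega(\mathcal{O}_1(X_n)) \to h$. Combined with $d(1,X_n)/n \to l$ and the fact that shadows of $X_n$ form a neighborhood basis of $\xi_\infty$, this yields
\[
\lim_{r\to 0}\frac{\log\nu_\omega(B(\xi,r))}{\log r} = \frac{h}{l} \quad \nu_\omega\text{-a.s.}
\]
Unbiasing from $\hat\mu_p$ back to $\mu_p|_{\Omega_1}$, which are mutually equivalent on $\Omega_1$, preserves the conclusion.

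The step I expect to be the main obstacle is the Ancona-type inequality in the random environment. On a deterministic hyperbolic Cayley graph with a group-equivariant walk, Ancona's inequality follows from translation invariance and uniform ellipticity; on a percolation cluster neither is available pointwise, since local geometry and escape probabilities fluctuate with $\omega$. Gaboriau's cluster relation substitutes for equivariance at the ergodic-theoretic level, but turning it into a shadow-to-transition comparison that is sharp enough to feed Shannon--McMillan--Breiman, and uniform enough to survive integration over the environment, should be the technical heart of the proof.
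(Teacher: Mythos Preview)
Your setup matches the paper closely: the degree-biased measure $\hat\mu_p$ is the paper's $\mu'$, the shift $T$ on the path bundle $\Omega_1 \times \Gamma^{\Z_+}$ is shown ergodic and measure-preserving exactly as you describe (via the cluster relation), and the entropy and speed are extracted by subadditive ergodic theorems.

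The gap is the Ancona step, and you are right to be suspicious of it: no two-sided comparison $\nu_\omega(\mathcal{O}_1(X_n)) \asymp p_n^\omega(1,X_n)$ is proved or used in the paper, and there is no indication that one is available for percolation clusters. The paper handles the two bounds asymmetrically. The upper bound $\limsup \le h/l$ needs only a one-sided inclusion---if the walk is at $z_n$ and tracks a geodesic, then a ball around $\xi_z$ of radius $\asymp e^{-(l-\epsilon)n}$ has $\nu_\omega$-mass at least a constant times $p_n^\omega(1,z_n)$ on a large set---which is soft and requires no Ancona. For the lower bound, the paper does \emph{not} prove $\liminf \ge h/l$ directly along the walk. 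Following Tanaka, it shows only that the set $D_\epsilon = \{(\omega,\xi) : \liminf_{r\to 0}\log\nu_\omega(B(\xi,r))/\log r \ge h/l - \epsilon\}$ has \emph{positive} measure with respect to $\eta = \int \delta_\omega \otimes \nu_\omega\, d\mu'$, via a counting estimate on shadows that uses only the upper entropy bound $p_n^\omega \le e^{-(h-\epsilon)n}$ together with geodesic tracking. The idea missing from your plan is a \emph{second} use of the cluster relation: one introduces the relation $\mathcal{R}_1 = \{((\omega,\xi),(g\omega,g\xi)) : g^{-1}\in C_\omega(1)\}$ on the ray bundle $\Omega_1 \times \partial G$, shows it is ergodic with respect to $\eta$ (by pulling back to the ergodic shift on the path bundle), and checks that $D_\epsilon$ is $\mathcal{R}_1$-invariant (using only that $g_*\nu_\omega \ge c\,\nu_{g\omega}$ when $g^{-1}$ is in the cluster, which follows from the strong Markov property). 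Ergodicity plus positivity then forces $D_\epsilon$ to be $\eta$-conull. In other words, the cluster relation is deployed not to rescue an Ancona inequality but to replace the ergodicity of the $\Gamma$-action on $\partial G$ used in the deterministic case, and this is precisely what allows the argument to bypass Ancona altogether.
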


In fact, our strategy works for a more general class of percolation models on $G$, as follows. 

\begin{theorem}\label{intromainpercpositive}
   Let $\mu$ be an ergodic $\Gamma$-invariant percolation on $G$ having indistinguishable infinite clusters. Assume that the simple random walk on $C_{\omega}(1)$ has positive speed for $\mu$-almost every $\omega \in \Omega_{1}$. Then, for $\mu$-almost every $\omega \in \Omega_{1}$, letting $\nu_{\omega}$ be the harmonic measure on $\partial G$ determined by the simple random walk on $C_{\omega}(1)$ starting from 1, we have
    \[
    \dim \nu_{\omega} = \lim_{r \to 0} \frac{\log \nu_{\omega}(B(\xi, r))}{\log r} = \frac{h}{l}
    \]
    for $\nu_{\omega}$-almost every $\xi \in \partial G$. In particular, $\dim \nu_{\omega}$ is positive and constant for $\mu$-almost every $\omega \in \Omega_{1}$.
\end{theorem}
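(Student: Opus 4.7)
The plan is to lift the problem to the annealed probability space $\Omega_1 \times G^\N$ equipped with the measure $d\mu(\omega) \otimes dP_\omega$, where $P_\omega$ denotes the law of the simple random walk on $C_\omega(1)$ starting at the identity. On this space I introduce the environment-seen-from-the-walker map
\[
T(\omega,(X_n)_{n\ge 0}) = \bigl(X_1^{-1}\omega,\,(X_1^{-1}X_{n+1})_{n\ge 0}\bigr),
\]
which preserves the annealed measure by $\Gamma$-invariance of $\mu$. The preparatory step carrying most of the weight is the ergodicity of $T$: for Bernoulli percolation this is essentially in \cite{MR1802426}, but in the present generality it requires the indistinguishability hypothesis. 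Using the framework of Gaboriau's cluster relations, any $T$-invariant event can be rephrased as a $\Gamma$-invariant cluster-measurable event on $\Omega_1$, which is then $\mu$-trivial by ergodicity of $\mu$ combined with indistinguishability of infinite clusters.

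With ergodicity of $T$ in hand, the entropy and speed emerge as deterministic Birkhoff limits. The speed $l = \lim \abs{X_n}/n$ is taken from \cite{MR1802426} and is positive by hypothesis. Applying the Birkhoff theorem to the cocycle $F(\omega,\mathbf{X}) = -\log p_\omega(X_0, X_1)$, where $p_\omega$ denotes the transition kernel on $C_\omega(1)$, yields
\[
-\frac{1}{n}\sum_{k=0}^{n-1} \log p_\omega(X_k, X_{k+1}) \longrightarrow h \quad \text{a.s.}
\]
for a deterministic $h>0$, which I identify with the asymptotic entropy of the walk.

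To transfer these limits to the Gromov boundary I use hyperbolicity. Positive speed guarantees that the walk converges almost surely to a boundary point $\xi_\infty$ tracking a geodesic ray from $1$ through $X_n$ at linear rate, so the shadow $\mathcal{O}(X_n)$ of $X_n$ viewed from $1$ is sandwiched between two $\partial G$-balls centered at $\xi_\infty$ of radii comparable to $e^{-\abs{X_n}}$. A Kaimanovich-type Shannon--McMillan--Breiman argument on the nested shadows $\mathcal{O}(X_n)$, which generate the tail $\sigma$-algebra of the walk, then yields $\nu_\omega(\mathcal{O}(X_n)) = e^{-nh + o(n)}$ along the trajectory. Combined with $\abs{X_n} = ln + o(n)$, this gives $\log \nu_\omega(B(\xi_\infty, r))/\log r \to h/l$ along the subsequence $r = e^{-\abs{X_n}}$, and monotonicity of $r \mapsto \nu_\omega(B(\xi_\infty, r))$ upgrades the convergence to arbitrary $r \to 0$.

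The principal obstacle is the ergodicity of $T$ in the general percolation setting: the annealed measure entangles the randomness of $\omega$ with that of the path, so a $T$-invariant event need not visibly be a function of $\omega$ alone, and it is precisely here that indistinguishability of infinite clusters---which homogenizes statements across all infinite clusters---is indispensable. A secondary issue is the uniformity of the hitting-probability-to-shadow-measure comparison across $\mu$-typical clusters, handled by the uniform bounded degree of $G$ together with standard nonamenability estimates for random walks on infinite clusters.
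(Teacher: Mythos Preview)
Your setup of the annealed path space with the environment shift $T$, and the identification of indistinguishability as the source of ergodicity of $T$, match the paper. The gap is the claimed Shannon--McMillan--Breiman estimate $\nu_\omega(\mathcal{O}(X_n))=e^{-nh+o(n)}$. One direction, $\nu_\omega(\mathcal{O}(X_n))\gtrsim e^{-nh}$, is easy and yields the upper bound $\dim\nu_\omega\le h/l$. The reverse inequality---that shadows are not too heavy---is precisely the hard step, and your sketch does not supply it. For a group random walk it comes from Kaimanovich's identification of $(\partial G,\nu)$ with the Poisson boundary and the cocycle SMB for $dg\nu/d\nu$; for the walk on a fixed cluster $C_\omega(1)$ there is no group acting on the chain, no Radon--Nikodym cocycle relating $\nu_{g\omega}$ to $\nu_\omega$ multiplicatively, and it is not known a priori that $(\partial G,\nu_\omega)$ realizes the full Poisson boundary of $X_\omega$. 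The paper does \emph{not} prove a pathwise SMB. Instead a counting argument in shadows (following Tanaka) shows only that $D_\epsilon=\{(\omega,\xi):\liminf_{r\to 0}\frac{\log\nu_\omega(B(\xi,r))}{\log r}\ge h/l-\epsilon\}$ has \emph{positive} measure on the ray bundle $\Omega_1\times\partial G$. The paper then shows $D_\epsilon$ is invariant under the cluster relation $\mathcal{R}_1=\{((\omega,\xi),(g\omega,g\xi)):g^{-1}\in C_\omega(1)\}$---using that for such $g$ the walk reaches $g^{-1}$ with positive probability, so $g_*\nu_\omega$ dominates a constant multiple of $\nu_{g\omega}$ and the local dimension is preserved---and concludes $D_\epsilon$ is conull by ergodicity of $\mathcal{R}_1$ (pulled back from ergodicity of $T$ via $\id\times\bnd$). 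This second ergodicity argument, on the ray bundle rather than the path bundle, is exactly what your SMB shortcut bypasses, and it is the paper's main technical contribution.

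There is also a secondary error: the Birkhoff average of $-\log p_\omega(X_k,X_{k+1})$ is the path entropy rate (for SRW in a cluster, just the expected log-degree under the stationary measure), not the asymptotic entropy $h=\lim_n -\frac{1}{n}\log p^n_\omega(1,X_n)$ that appears in the dimension formula. On the $d$-regular tree these are $\log d$ and $\frac{d-2}{d}\log(d-1)$ respectively. The paper obtains the correct $h$ via Kingman's subadditive ergodic theorem applied to $\psi_n(\omega,x)=-\log p^n_\omega(1,x_n)$. Finally, a minor point: $T$ preserves the degree-biased annealed measure $d\mu'(\omega)\propto\deg_{C_\omega(1)}(1)\,d\mu(\omega)$, not the plain one.
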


We refer to Section 5 for the precise definitions of the terms appearing in this theorem. Note that Benjamini, Lyons, and Schramm gave some sufficient conditions for positive speed in \cite{MR1802426}, and hence Theorem \ref{intromainpercpositive} can be applied to various models (see Theorem \ref{BLS}). For example, every ergodic invariant percolation with a unique infinite cluster satisfies the assumptions in our theorem.

\subsection{Outline of the proof}

Let us give an overview of the proof of Theorem \ref{intromainpercpositive}. First, the upper bound,
\[
\limsup_{r \to 0} \frac{\log \nu_{\omega}(B(\xi, r))}{\log r} \le \frac{h}{l}
\]
for $\nu_{\omega}$-almost every $\xi$, follows from Kaimanovich's argument for trees (Theorem 1.4.1 of \cite{MR1631732}). The difficulty arises in the proof of the lower bound, i.e.,
\[
\liminf_{r \to 0} \frac{\log \nu_{\omega}(B(\xi, r))}{\log r} \ge \frac{h}{l}
\]
for $\nu_{\omega}$-almost every $\xi$.
We first prove that, for every $\epsilon > 0$, the set $D_{\epsilon}$ defined by
\[
D_{\epsilon} = \biggl\{ (\omega, \xi) \in \Omega_{1} \times \partial G  \colon  \liminf_{r \to 0} \frac{\log \nu_{\omega}(B(\xi, r))}{\log r} \ge \frac{h}{l} - \epsilon \biggr\}
\]
is positive with respect to the measure on $\Omega_{1} \times \partial G$ given by $\int_{\Omega_{1}} \delta_{\omega} \otimes \nu_{\omega} d\mu(\omega)$. This part follows the proof of Theorem 3.3 in \cite{MR3893268}, where a similar claim was shown for random walks driven by a single measure. Tanaka \cite{MR3893268} proved that the harmonic measure determined by the random walk driven by a single measure with finite first moment is exact dimensional and satisfies the dimension formula. In his proof, the ergodicity of the $\Gamma$-action on $\partial G$ was used to prove that the subset of $\partial G$ defined analogously to $D_{\epsilon}$ is conull, and it completes the proof of the lower bound in his setting. \\
\indent In our setting, to prove that $D_{\epsilon}$ is conull, the first attempt should be to consider the diagonal action on $\Omega_{1} \times \partial G$, instead of the boundary action. However, since $\Omega_{1}$ is not $\Gamma$-invariant, we cannot do this naively. Instead, we define a subrelation of the orbit equivalence relation of the diagonal action, inspired by cluster relations in \cite{MR2221157}, so that we can prove that the ergodicity of the relation and show that $D_{\epsilon}$ is invariant under the relation. Combining the ergodicity and the invariance with the positivity of $D_{\epsilon}$, we complete the proof. Such an argument is quite simple but has not appeared in the literature. In fact, in the case of free groups \cite{MR1631732}, the lower bound can be shown for essentially arbitrary random walk with positive speed in general, and hence such an argument involving the ergodicity on the boundary does not appear.  \\

\subsection{Related works and applications}

Among hyperbolic groups, cocompact Fuchsian groups have been well studied in the context of percolation theory. They are closely connected to periodic tilings of the hyperbolic plane $\mathbb{H}^{2}$. Carrasco, Lessa, and Paquette developed the theory of a general class of random walks on metric spaces called distance stationary sequences and applied it to the simple random walks in percolation clusters on cocompact Fuchsian groups in \cite{MR4243517}. In particular, they gave an explicit lower bound for the speed of such random walk in terms of the corresponding hyperbolic tiling. Combining our dimension formula with their estimate, we can show the dimension drop of the harmonic measures. This generalizes their Theorem 4 in \cite{MR4243517}. More precisely, for the pair of positive integers $P$ and $Q$ with $1/P + 1/Q < 1/2$, let $T_{P, Q}$ be the regular tiling of $\mathbb{H}^{2}$ by $P$-gons with interior angles $2\pi/Q$ and $(\Gamma_{P, Q}, S_{P, Q})$ be the pair of the cocompact Fuchsian group and its generating set corresponding to $T_{P, Q}$. Let $G_{P, Q}$ be the Cayley graph associated with $(\Gamma_{P, Q}, S_{P, Q})$, which is the dual graph obtained from $T_{P, Q}$. We consider the metric $d_{\mathbb{H}}$ on $G_{P, Q}$ induced from the standard hyperbolic metric of $\mathbb{H}^{2}$.

\begin{theorem}
    Let $G_{P, Q}$ be the graph defined as above, and $\mu_{p, P, Q}$ denote a supercritical Bernoulli percolation on $G_{P, Q}$ with $p \in [0, 1]$. Then, the harmonic measures determined by $\mu_{p, P, Q}$ are exact dimensional and satisfy the dimension formula with respect to $d_{\mathbb{H}}$, and the dimension is a constant, denoted by $\delta_{p, P, Q}$, for $\mu_{p, P, Q}$-almost every $\omega \in \Omega_{1}$. Further, we have
    \[
    \limsup_{Q \to \infty} \delta_{p, P, Q} \le \frac{1}{2}
    \]
    uniformly in $P$.
\end{theorem}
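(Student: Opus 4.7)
The plan is to combine Theorem~\ref{intromainBernolli} with the quantitative speed estimate of Carrasco--Lessa--Paquette \cite{MR4243517}, generalizing their Theorem~4.

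First I would verify that Theorem~\ref{intromainBernolli} yields the dimension formula with respect to the visual metric induced by $d_\mathbb{H}$. Since $\Gamma_{P,Q}$ acts cocompactly on $\mathbb{H}^2$, the restriction of $d_\mathbb{H}$ to the $\Gamma_{P,Q}$-orbit of the basepoint is quasi-isometric to the word metric on $\Gamma_{P,Q}$, so the induced visual metrics on the Gromov boundary (canonically identified with $\partial \mathbb{H}^2 = S^1$) are H\"older-equivalent. Exact dimensionality is preserved under H\"older equivalence, and the formula transforms into $\delta_{p,P,Q} = h/l_\mathbb{H}$, where $l_\mathbb{H} = \lim_n d_\mathbb{H}(1, X_n)/n$ is the almost-sure hyperbolic speed. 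The latter is deterministic by the same subadditive ergodic argument used in \cite{MR1802426}, now applied to $d_\mathbb{H}$.

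Next I would bound $h/l_\mathbb{H}$ from above. The entropy admits the crude degree bound $h \le \log P$, since the walk is a reversible simple random walk on a subgraph of $G_{P,Q}$ with maximum degree $P$ (the tree bound $h \le \tfrac{P-2}{P}\log(P-1)$ is available if needed). For the denominator, the estimate from \cite{MR4243517} provides an explicit lower bound on $l_\mathbb{H}$ in terms of the hyperbolic geometry of the tiling $T_{P,Q}$; crucially, it grows with the hyperbolic length of an edge of $G_{P,Q}$, which is asymptotic to $2\log Q$ as $Q \to \infty$.

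Dividing, $\delta_{p,P,Q}$ is controlled by a ratio with leading behaviour $(\log P)/(2\log Q)$ multiplied by an explicit factor extracted from \cite{MR4243517}. The main obstacle is the requirement of \emph{uniformity in $P$}: since the naive bound on $h$ grows with $P$, one must track the $P$-dependence in the CLP lower bound on $l_\mathbb{H}$ carefully enough to extract the target constant $1/2$ in the limit --- this is where the factor $1/2$, reflecting the leading asymptotics of the hyperbolic edge length, makes its appearance. Once this bookkeeping is settled, $\limsup_{Q\to\infty}\delta_{p,P,Q} \le 1/2$ uniformly in $P$, completing the proof.
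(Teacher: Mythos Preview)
Your overall strategy coincides with the paper's: establish the dimension formula relative to $d_{\mathbb{H}}$, then feed in the trivial entropy upper bound together with the Carrasco--Lessa--Paquette speed lower bound (Theorem~\ref{CLP}). Your treatment of the first part via H\"older equivalence of visual metrics is a legitimate alternative to the paper's approach, which simply reruns the proof of Theorem~\ref{mainPerc} with $d_{\mathbb{H}}$ in place of the word metric.

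The genuine gap is in your entropy bound. You claim the vertex degree of $G_{P,Q}$ is $P$ and hence $h \le \log P$; the paper uses the ``trivial bound $h_{p,P,Q} \le \log Q$'', and this is the correct one. The phrase ``dual graph obtained from $T_{P,Q}$'' is admittedly misleading, but the Cayley graph $G_{P,Q}$ has its vertices at the vertices of the tiling $T_{P,Q}$, where $Q$ faces meet, so $\lvert S_{P,Q}\rvert = Q$. This is also forced by consistency with Theorem~\ref{CLP}: the hyperbolic edge length of the $1$-skeleton satisfies $\cosh(a/2)=\cos(\pi/P)/\sin(\pi/Q)$, so $a\sim 2\log Q$ as $Q\to\infty$, which matches the speed estimate; by contrast the distance between adjacent face centres stays bounded as $Q\to\infty$, so a walk on the face-dual could not have speed growing like $2\log Q$.

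With the correct bound the conclusion is immediate and requires no further bookkeeping:
\[
\delta_{p,P,Q} \;=\; \frac{h}{l} \;\le\; \frac{\log Q}{\,2\log Q - \tfrac{1}{p}\,O(\log\log Q)\,} \;\longrightarrow\; \frac{1}{2}
\]
uniformly in $P$. Your bound $h\le\log P$, on the other hand, cannot close the argument: for any fixed large $Q$ the constraint $1/P+1/Q<1/2$ allows $P$ to be arbitrarily large, so $(\log P)/(2\log Q)$ is unbounded in $P$, and no amount of tracking the $P$-dependence in the CLP estimate (which is uniform in $P$ by design) will rescue the uniform conclusion.
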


A natural question arising from our result is about the behavior of the dimension of the harmonic measures determined by the Bernoulli percolation $\mu_{p}$ when the parameter $p$ varies. In \cite{MR1873136}, Lalley treated the limit sets of percolation clusters on the boundary and proved that its Hausdorff dimension is continuous in the parameter $p$. Then, it is natural to ask if similar properties also hold for the dimension of the harmonic measures. Our result reduces the continuity of $\dim \nu_{\omega}$ to the continuity of the entropy and the speed. The latter question seems more tractable; we leave it open. \\
\indent Harmonic measures are also studied in the context of random discretizations of the hyperbolic plane. Angel, Hutchcroft, Nachmias, and Ray studied the simple random walks on unimodular random triangulations of the hyperbolic plane (such as the Poisson-Delaunay triangulations) in \cite{MR3556528}, and show that the associated harmonic measures have full support and no atom. Our method applies to this setting, and we will treat it in our next paper.    

\subsection{Organization of the paper}
In Section 2, we review some definitions and basic properties concerning word hyperbolic groups, random walks on them, and Hausdorff dimensions of measures. In Section 3, we show two estimates for general Markov chains with positive speed. In Section 4, we first treat invariant random conductance models with the uniform elliptic condition, where the arguments are simpler than that for invariant percolations. In Section 5, we develop ergodic theory of invariant percolations and use it to prove the exact dimensionality and the dimension formula. In Section 6, we present an application to cocompact Fuchsian groups and hyperbolic tilings. In Section 7, we propose some questions naturally arising from our results. In Appendix A, we give the proof of Theorem \ref{fundthmPerc}. Although such results are standard and well known to experts, we give the detailed proof for the completeness. \\

\section*{Acknowledgements}
The author would like to thank Yoshikata Kida and Ryokichi Tanaka for their supports and helpful comments. This research was supported by FoPM, WINGS Program, the University of Tokyo.

\section{Preliminaries}

\subsection{Geometry of word hyperbolic groups}

Let us start with the definition of hyperbolicity in the sense of Gromov \cite{MR0919829}.

\begin{definition}[$\delta$-hyperbolicity]
    Let $(X, d)$ be a proper metric space. For $x, y, z \in X$, we define the \textit{Gromov product} of $x, y$ over $z$ by
    \[
    (x\,|\,y)_{z} = \frac{d(x,z) + d(y,z) - d(x,y)}{2}.
    \]
    Let $\delta$ be a non-negative number. We say that $(X, d)$ is $\delta$-\textit{hyperbolic} if 
    \[
    (x\,|\,y)_{w} \ge \min \{(x\,|\,z)_{w}, (y\,|\,z)_{w} \} - \delta
    \]
    for all $x, y, z, w \in X$. We say that $(X, d)$ is \textit{hyperbolic} if it is $\delta$-hyperbolic for some $\delta \ge 0$.
\end{definition}

We focus on hyperbolic Cayley graphs. Throughout this paper, we always assume that a finite generating set $S$ of a group is symmetric and $1 \notin S$.

\begin{definition}[Hyperbolic groups]
    Let $\Gamma$ be a finitely generated group. We say that $\Gamma$ is \textit{hyperbolic} if there exists a finite generating set $S$ of $\Gamma$ such that the Cayley graph $G$ associated with $(\Gamma, S)$ is hyperbolic with respect to the graph metric $d_{G}$. A hyperbolic group $\Gamma$ is called \textit{elementary} if it is finite or virtually $\Z$. We always take $1$ as the base point of $G$ and define $\lvert g\rvert = d_{G}(1, g)$ for $g \in \Gamma$.  
\end{definition}

In the rest of this section, $\Gamma$ denotes a nonelementary hyperbolic group and $G$ denotes its Cayley graph with respect to some generating set $S$ of $\Gamma$. Note that such $\Gamma$ is nonamenable. 

\indent Let us define the boundary of $G$. For $x, y \in G$, $(x\,|\,y)$ denotes the Gromov product of $x$ and $y$ over $1$.

\begin{definition}[Gromov boundary]
    Let $G$ be a hyperbolic Cayley graph endowed with the graph metric. We define the \textit{boundary} $\partial G$ as follows:
    \begin{itemize}
        \item The set $\partial G$ is the quotient of the set of geodesic rays starting from $1$ by identifying two rays when they are within a bounded distance. 
        \item The quasi-metric $\rho$ on $\partial G$ is defined by $\rho(\xi, \xi')= \exp{(-(\xi \, | \, \xi'))}$, where $(\xi \,|\,\xi')$ is defined by 
        \[
        (\xi\,|\,\xi') = \sup \{ \liminf_{n, m \to \infty} (x_{n}\,|\,x'_{m}) \colon x_{n} \in [\xi], x'_{m} \in [\xi'] \}. 
        \]
        Here, $[\xi]$ denotes the set of geodesic rays from $1$ represent $\xi$.
    \end{itemize}
\end{definition}

Although $\rho$ is not a genuine metric, for every small $\epsilon$, there exists a genuine metric $\rho_{\epsilon}$ satisfying
\[
C_{\epsilon}^{-1}\rho^{\epsilon}(\xi, \xi') \le \rho_{\epsilon}(\xi, \xi') \le C_{\epsilon} \rho^{\epsilon}(\xi, \xi')
\]
where $C_{\epsilon}$ is a constant depending only on $\epsilon$.
The fact that $\rho$ is not a genuine metric has no effect on the argument concerning the Hausdorff dimension. Then we only treat $\rho$ for simplicity.  \\

For the $\Gamma$-action on $\partial G$, we have the following estimate.

\begin{lemma}[Lemma 2.2 in \cite{MR3893268}]\label{gammaball}
    Let $g \in \Gamma$. Then there exists $c>0$ such that 
    \[
    B(g\xi, c^{-1}R) \subset gB(\xi, R) \subset B(g\xi, cR)
    \]
    for every $R>0$ and $\xi \in \partial G$.
\end{lemma}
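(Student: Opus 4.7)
The plan is to reduce the set-theoretic inclusions to a quasi-conformal estimate on $\rho$, then derive that estimate from the basepoint-change formula for the Gromov product.

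First, I would show it suffices to establish the existence of $c = c(g) > 0$ such that
\[
c^{-1}\rho(\xi, \xi') \le \rho(g\xi, g\xi') \le c\,\rho(\xi, \xi')
\]
for all $\xi, \xi' \in \partial G$. Granting this, the inclusion $gB(\xi, R) \subset B(g\xi, cR)$ is immediate from the right-hand estimate. For the reverse inclusion $B(g\xi, c^{-1}R) \subset gB(\xi, R)$, I would pick $\eta \in B(g\xi, c^{-1}R)$, set $\xi' := g^{-1}\eta$, and apply the left-hand estimate to the pair $(\xi, \xi')$ to conclude $\rho(\xi, \xi') \le c\,\rho(g\xi, g\xi') < R$, so that $\eta = g\xi' \in gB(\xi, R)$.

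To establish the quasi-conformal bound, I would use that $\Gamma$ acts on $G$ by left translation, which is an isometry of the graph metric. Hence for all $x, y \in G$ and any basepoint $z$ one has $(gx\,|\,gy)_{gz} = (x\,|\,y)_{z}$ exactly, and this identity extends to the boundary via the sup-liminf definition: $(g\xi\,|\,g\xi')_{gz} = (\xi\,|\,\xi')_{z}$. Taking $z = g^{-1}$ yields $(g\xi\,|\,g\xi') = (\xi\,|\,\xi')_{g^{-1}}$. Now the pointwise estimate $|(x\,|\,y)_{z} - (x\,|\,y)_{w}| \le d(z, w)$ for $x, y \in G$, passed through the sup-liminf of the boundary Gromov product, gives the basepoint-change inequality
\[
\bigl|(\xi\,|\,\xi')_{g^{-1}} - (\xi\,|\,\xi')\bigr| \le |g| + O(\delta),
\]
where the $O(\delta)$ error absorbs the discrepancy between taking sup-liminf over geodesic-ray representatives based at $1$ versus at $g^{-1}$. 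Exponentiating and setting $c := e^{|g| + O(\delta)}$ completes the proof.

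The substantive technical point is the justification of the basepoint-change inequality at the level of the sup-liminf definition, which reduces to the standard fact that, in a $\delta$-hyperbolic space, the Gromov product of two boundary points computed from any sequences approximating them agrees, up to an additive $O(\delta)$ error, with the sup-liminf over geodesic-ray representatives from a fixed basepoint. Since this is a well-known feature of Gromov boundaries, I do not expect any genuine obstacle beyond bookkeeping of the $\delta$-hyperbolic constants.
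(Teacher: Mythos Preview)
Your argument is correct and is the standard way to prove this lemma: reduce the ball inclusions to the bi-Lipschitz estimate $c^{-1}\rho(\xi,\xi') \le \rho(g\xi,g\xi') \le c\,\rho(\xi,\xi')$, then deduce the latter from the basepoint-change inequality for Gromov products combined with the isometry identity $(g\xi\,|\,g\xi')_{1} = (\xi\,|\,\xi')_{g^{-1}}$. The paper itself does not supply a proof of this lemma; it is quoted directly from \cite{MR3893268} (Lemma~2.2 there), so there is no in-paper argument to compare your proposal against.

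One small remark on presentation: in your final step you may as well dispense with the $O(\delta)$ bookkeeping altogether. The pointwise inequality $|(x\,|\,y)_{z} - (x\,|\,y)_{w}| \le d(z,w)$ passes through any choice of approximating sequences $x_n \to \xi$, $y_m \to \xi'$ before taking $\liminf$ and $\sup$, and the standard fact that in a $\delta$-hyperbolic space the boundary Gromov product computed via arbitrary converging sequences agrees with the geodesic-ray definition up to a universal additive constant $C_\delta$ then gives $|(\xi\,|\,\xi')_{g^{-1}} - (\xi\,|\,\xi')| \le |g| + C_\delta$. Exponentiating with $c = e^{|g| + C_\delta}$ finishes the proof; there is no hidden obstacle beyond this routine bookkeeping, exactly as you anticipated.
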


Let us introduce the shadow of $x \in \Gamma$ on the boundary.

\begin{definition}
    Let $x \in \Gamma$ and $R>0$. We define the \textit{shadow} of $(x, R)$ by
    \[
    S(x, R) = \{ \xi \in \partial G \colon (\xi|x) > \lvert x \rvert - R \}.
    \]
\end{definition}

We have the following comparison between shadows and balls.

\begin{lemma}[Proposition 2.1 in \cite{MR2919980}]\label{shadowball}
    There exist $C>0$ and $R_{0}>0$ such that 
    \[
    B(\xi, C^{-1} e^{-\lvert x \rvert + R}) \subset S(x, R) \subset B(\xi, C e^{-\lvert x \rvert + R})
    \]
    if $R \ge R_{0}$ and $x \in \Gamma$ is on a geodesic ray from $1$ to $\xi \in \partial G$.
\end{lemma}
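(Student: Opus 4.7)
The plan is to derive both inclusions directly from $\delta$-hyperbolicity, once one observes that the Gromov product $(\xi|x)$ agrees with $\lvert x \rvert$ up to an additive $O(\delta)$ whenever $x$ lies on a geodesic ray from $1$ to $\xi$. This follows by taking $x_n$ further along the same ray and using that $(x_n|x) = \lvert x \rvert$ exactly, together with the trivial upper bound $(\xi|x) \le \lvert x \rvert$ coming from the triangle inequality.

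For the outer inclusion $S(x, R) \subset B(\xi, Ce^{-\lvert x \rvert + R})$, I would start from $(\xi'|x) > \lvert x \rvert - R$ and apply the hyperbolicity inequality to the triple $\xi, \xi', x$:
\[
(\xi|\xi') \ge \min\bigl\{(\xi|x), (\xi'|x)\bigr\} - O(\delta) \ge \lvert x \rvert - R - O(\delta).
\]
Exponentiating yields $\rho(\xi, \xi') \le e^{O(\delta)} e^{-\lvert x \rvert + R}$, so the inclusion holds with $C = e^{O(\delta)}$.

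For the inner inclusion $B(\xi, C^{-1} e^{-\lvert x \rvert + R}) \subset S(x, R)$, suppose $(\xi|\xi') > \lvert x \rvert - R + \log C$. Applying hyperbolicity to the triple $\xi', x, \xi$ gives
\[
(\xi'|x) \ge \min\bigl\{(\xi|\xi'), (\xi|x)\bigr\} - O(\delta).
\]
Choosing $R_0$ large enough that, for $R \ge R_0$, the quantity $\lvert x \rvert - R + \log C$ is strictly smaller than $(\xi|x) = \lvert x \rvert - O(\delta)$, the minimum is attained by $(\xi|\xi')$, and hence $(\xi'|x) > \lvert x \rvert - R + \log C - O(\delta)$. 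Taking $C$ large enough to absorb the remaining $O(\delta)$ term yields $\xi' \in S(x, R)$.

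The main technical point I expect to handle carefully is that the $\delta$-hyperbolicity axiom is stated for points of $G$, whereas the triples above involve boundary points; this is standard and is dealt with by approximating $\xi, \xi'$ by sequences of interior points tending to them and passing to $\liminf$, which at worst enlarges the defect from $\delta$ to a bounded multiple of $\delta$. Once this is in place, the remainder is algebra together with a single sufficiently large choice of $C$ and $R_0$ absorbing all the $O(\delta)$ errors.
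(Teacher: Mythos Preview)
The paper does not give its own proof of this lemma; it is stated with a citation to Proposition~2.1 of \cite{MR2919980} and used as a black box. So there is nothing in the paper to compare your argument against. Your approach is the standard one and is essentially correct.

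One small logical slip in your argument for the inner inclusion: from $(\xi|\xi') > \lvert x \rvert - R + \log C$ and $\lvert x \rvert - R + \log C < (\xi|x)$ you conclude that the minimum of $(\xi|\xi')$ and $(\xi|x)$ is attained by $(\xi|\xi')$. That does not follow, since $(\xi|\xi')$ could be very large (when $\xi'$ is close to $\xi$) and exceed $(\xi|x)$. The fix is immediate: if the minimum is $(\xi|x)$ instead, then $(\xi'|x) \ge (\xi|x) - O(\delta) \ge \lvert x \rvert - O(\delta)$, which is again $> \lvert x \rvert - R$ once $R \ge R_0$ with $R_0$ absorbing the $O(\delta)$. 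So both cases give $\xi' \in S(x,R)$, and the argument goes through after this easy case split.

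Your remark about extending the $\delta$-hyperbolicity inequality to triples containing boundary points is the right technical point to flag; the standard statement is that the inequality persists with $\delta$ replaced by $2\delta$ (or some fixed multiple), and this is exactly what your $O(\delta)$ bookkeeping absorbs.
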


\subsection{Hausdorff dimensions of measures}

\begin{definition}[Hausdorff dimensions of measures]
   Let $\nu$ be a Borel probability measure on $\partial G$. We define the \textit{(upper) Hausdorff dimension} of $\nu$ by
   \[
   \dim \nu = \inf \{ \dim_{\rho} E \colon \nu(E) = 1 \},
   \]
   where $\dim_{\rho}E$ denotes the Hausdorff dimension of $E$ with respect to the quasi-metric $\rho$. 
\end{definition}

We have the following characterization of the dimension.

\begin{lemma}\label{Frostman}
   Let $\nu$ be a Borel probability measure on the boundary $\partial G$. Then the Hausdorff dimension of $\nu$ can be characterized as follows:
  \[
  \dim \nu = \underset{\xi \in \partial G}{\nu\text{-}\sup} \liminf_{r \to 0} \frac{\log \nu (B(\xi, r))}{\log r},  
  \]
  where ``$\nu\text{-}\sup$" indicates the essential supremum with respect to $\nu$, and $B(\xi, r)$ is the ball of radius $r$ centered at $\xi$ with respect to the quasi-metric $\rho$.
\end{lemma}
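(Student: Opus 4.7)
The plan is to show both inequalities $\dim \nu \leq \alpha$ and $\dim \nu \geq \alpha$, where $\alpha$ denotes the essential supremum on the right-hand side, using classical mass distribution/covering arguments adapted to the boundary $\partial G$. Throughout I use the fact, quoted in the text, that $\rho$ is bi-Hölder equivalent to a genuine metric $\rho_\epsilon$, so Vitali-type covering lemmas apply and Hausdorff-dimension inequalities are unaffected. Write $\underline{d}_\nu(\xi) := \liminf_{r \to 0} \log \nu(B(\xi, r)) / \log r$.

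\textbf{Upper bound.} Fix $s > \alpha$. By definition of essential supremum, the set
\[
E_s = \{\xi \in \partial G \colon \underline{d}_\nu(\xi) \leq s\}
\]
has $\nu(E_s) = 1$. For any $\epsilon > 0$ and any $\xi \in E_s$, there exist arbitrarily small radii $r$ such that $\nu(B(\xi, r)) \geq r^{s+\epsilon}$. Given $\delta > 0$, for each $\xi \in E_s$ choose such an $r(\xi) < \delta$, and apply a standard $5r$-covering lemma (valid because $\partial G$ is compact and $\rho_\epsilon$ is a metric) to extract a disjoint subfamily of balls $B(\xi_i, r_i)$ whose $5$-fold enlargements cover $E_s$. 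Then
\[
\sum_i (5 r_i)^{s+\epsilon} \leq 5^{s+\epsilon} \sum_i \nu(B(\xi_i, r_i)) \leq 5^{s+\epsilon},
\]
so the Hausdorff measure of $E_s$ in dimension $s+\epsilon$ is finite. Hence $\dim_\rho E_s \leq s+\epsilon$; letting $\epsilon \to 0$ and then $s \to \alpha^+$ yields $\dim \nu \leq \alpha$.

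\textbf{Lower bound.} Fix $s < \alpha$. Again by definition of essential supremum, the set $A_s = \{\xi : \underline{d}_\nu(\xi) > s\}$ has positive $\nu$-measure. Stratify
\[
A_s = \bigcup_{k \geq 1} A_{s,k}, \qquad A_{s,k} = \{\xi \in A_s : \nu(B(\xi, r)) \leq r^{s} \text{ for all } r \leq 1/k\}.
\]
Since $\nu(A_s) > 0$, some $A_{s,k_0}$ has $\nu(A_{s,k_0}) > 0$. Now given any Borel set $E \subset \partial G$ with $\nu(E) = 1$, we have $\nu(E \cap A_{s,k_0}) = \nu(A_{s,k_0}) > 0$. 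For any countable cover of $E \cap A_{s,k_0}$ by balls $B(\xi_i, r_i)$ of radii $r_i \leq 1/k_0$,
\[
\sum_i r_i^s \geq \sum_i \nu(B(\xi_i, r_i)) \geq \nu(E \cap A_{s, k_0}) > 0,
\]
which is the mass distribution principle. Hence $\dim_\rho(E \cap A_{s,k_0}) \geq s$, so $\dim_\rho E \geq s$. Taking the infimum over such $E$ and then $s \to \alpha^-$ gives $\dim \nu \geq \alpha$.

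\textbf{Main technical point.} The substance of the argument is purely measure-theoretic; the only place where the boundary geometry intervenes is in justifying a $5r$-covering lemma. This is where switching from $\rho$ to the comparable genuine metric $\rho_\epsilon$ is needed, and one must verify that the Hausdorff-dimension computations with respect to $\rho$ and $\rho_\epsilon$ agree up to the factor $\epsilon$ built into $\rho_\epsilon$; this is routine since the two quasi-distances are bi-Hölder equivalent, so Hausdorff dimensions scale by $1/\epsilon$ in both directions and the identity in the lemma is independent of the choice of representative.
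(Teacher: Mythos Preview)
Your argument is correct and is the standard Frostman/mass-distribution proof of this classical fact. The paper itself gives no proof at all: it simply refers the reader to Section~1.3 of \cite{MR1631732}. So there is nothing to compare at the level of strategy; you have supplied the details that the paper outsources.

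One small sloppiness worth tightening in the lower bound: when you cover $E \cap A_{s,k_0}$ by balls $B(\xi_i, r_i)$, the centers $\xi_i$ need not lie in $A_{s,k_0}$, so the inequality $\nu(B(\xi_i,r_i)) \le r_i^{s}$ is not immediate. The usual fix is either (i) to restrict to covers by balls centered in the set (which gives the same Hausdorff dimension), or (ii) for each $i$ with $B(\xi_i,r_i)\cap A_{s,k_0}\neq\varnothing$ pick $\eta_i$ in that intersection, note $B(\xi_i,r_i)\subset B(\eta_i,Cr_i)$ for a constant $C$ coming from the quasi-metric, and apply the defining inequality of $A_{s,k_0}$ at $\eta_i$. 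Either way only the constant in front of $\nu(E\cap A_{s,k_0})$ changes, and the conclusion is unaffected.
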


\begin{proof}
    See Section 1.3 in \cite{MR1631732}, for example.
\end{proof}
 
\begin{definition}[Exact dimensionality]
    Let $\nu$ be a Borel probability measure on $\partial G$. We say that $\nu$ is \textit{exact dimensional} if the limit
    \[
    \lim_{r \to 0} \frac{\log \nu (B(\xi, r))}{\log r}
    \]
    exists for $\nu$-almost every $\xi$ and it is constant $\nu$-almost everywhere. Note that if $\nu$ is exact dimensional then the above limit is equal to $\dim \nu$ by Lemma \ref{Frostman}. 
\end{definition}

\subsection{Regular Markov chains on hyperbolic groups}

We review some basic properties of nearest neighbor random walks on $G$. We always assume that Markov chains are nearest neighbor, defined on a hyperbolic Cayley graph $G$, and starting from $1$.

\begin{definition}[Regular Markov chains]
Let $P$ be a nearest neighbor Markov kernel on $G$ and $X$ be the corresponding Markov chain starting from $1$. Let $\mathbb{P}$ be the distribution of sample paths of $X$, which is a probability measure on $\Gamma^{\Z_{+}}$. We say that $X$ is \textit{regular} if there exist reals $h, l \ge 0$ satisfying the following:
\begin{itemize}
    \item $\lim_{n \to \infty}\frac{-\log p^{n}(1, x_{n})}{n} = h$ for $\mathbb{P}$-almost every $x = (x_{n})_{n \in \Z_{+}}$. Here $p^{n}$ denotes the $n$-step transition probability of $P$. 
    \item $\lim_{n \to \infty}\frac{\lvert x_{n} \rvert}{n} = l$ for $\mathbb{P}$-almost every $x = (x_{n})_{n \in \Z_{+}}$.
\end{itemize}
The limits $h, l$ are called the \textit{entropy} and the \textit{speed} of $X$, respectively.
\end{definition}

We will show that if $X$ is a Markov chain determined by invariant random conductance model or invariant percolation then it is regular and the entropy and the speed are both positive. See Theorem \ref{entspRCM} and Theorem \ref{BLS} for precise statements.

The following result is fundamental in the study of random walks on hyperbolic groups, shown by Kaimanovich in \cite{MR1815698}.

\begin{theorem}[Section 7 of \cite{MR1815698}]\label{convtoboundaary}
    Let $X$ be a regular Markov chain on a non-elementary hyperbolic group $\Gamma$. Then for $\mathbb{P}$-almost every $x = (x_{n})_{n \in \Z_{+}}$, it converges to a point in $\partial G$. More precisely, for $\mathbb{P}$-almost every sample path $x = (x_{n})_{n \in \Z_{+}}$, there exists a unit speed geodesic ray $\xi_{x}$ such that
    \[
    \lim_{n \to \infty} \frac{1}{n} d_{G}(x_{n}, \xi_{x}(ln)) = 0,
    \]
    where $l$ is the speed of $X$. 
\end{theorem}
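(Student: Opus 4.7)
The plan is to prove the theorem in two stages: first establish almost sure convergence of $x_n$ to a limit point $\xi_x \in \partial G$, and then extract a geodesic ray and verify the sublinear tracking.

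For the convergence, I would show that the Gromov products $(x_n \,|\, x_m)$ diverge to infinity as $n, m \to \infty$ for $\mathbb{P}$-almost every sample path; once established, convergence to $\partial G$ follows because $\partial G$ is the Gromov completion of $G$ under $\rho(\xi, \xi') = \exp(-(\xi \,|\, \xi'))$. The nearest-neighbor structure gives $d_G(x_n, x_m) \le |m - n|$, and positive speed gives $|x_n| \approx ln$, but these bounds alone do not force $(x_n \,|\, x_m) \to \infty$ outside of hyperbolic geometry. The hyperbolic inequality $(x \,|\, y)_w \ge \min\{(x \,|\, z)_w, (y \,|\, z)_w\} - \delta$ is the essential input: combined with the regularity of the chain, a shift/resynchronization argument applied to the post-$n$ process $(x_{n+k})_{k \ge 0}$ (viewed as a Markov chain on the translated graph $x_n^{-1}G$) should show that the triangles $\{1, x_n, x_m\}$ are uniformly thin near $1$, forcing $(x_n \,|\, x_m)$ to diverge. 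This is where I expect the main difficulty to lie: positive speed plus nearest-neighbor bounds are not by themselves sufficient, so hyperbolicity and the Markov/regularity structure must intertwine in a nontrivial way, with the entropy hypothesis likely entering through an entropic control of the rare backtrackings that could otherwise destroy the Gromov product estimate.

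For the geodesic tracking, I would extract $\xi_x$ as a subsequential limit of geodesics from $1$ to $x_{n_k}$, using local finiteness of $G$ and Arzel\`a--Ascoli. By the thin triangle property, any two such subsequential limits differ by a bounded amount, so $\xi_x$ is well-defined up to bounded perturbation, and its endpoint at infinity agrees with the limit supplied by the first stage. The sublinear tracking estimate $d_G(x_n, \xi_x(ln))/n \to 0$ then reduces to a hyperbolic-geometric calculation: $|x_n|/n \to l$ together with $(x_n \,|\, \xi_x) \to \infty$ forces the nearest point on $\xi_x$ to $x_n$ to lie at parameter approximately $|x_n| \approx ln$, with the remaining $o(n)$ error absorbed by hyperbolicity.
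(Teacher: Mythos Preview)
The paper does not prove this theorem; it is quoted from Section~7 of Kaimanovich's paper and used throughout as a black box. There is therefore no proof in the present paper against which to compare your proposal.

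On the proposal itself: your two-stage outline---first boundary convergence via $(x_n\,|\,x_m)\to\infty$, then extraction of a geodesic ray by Arzel\`a--Ascoli and verification of sublinear tracking from $|x_n|/n\to l$---matches the standard route, and your second stage is essentially correct as sketched. In the first stage, however, your suggestion that the entropy hypothesis must enter to control ``rare backtrackings'' is off the mark: the convergence to $\partial G$ uses only positive speed and the nearest-neighbour (bounded-increment) hypothesis, together with $\delta$-hyperbolicity. The mechanism is purely geometric: linear growth of $|x_n|$ combined with the exponential divergence of geodesics in a hyperbolic space prevents the trajectory from ``swinging around'' toward a second boundary point while keeping bounded step size, and this is what forces $(x_n\,|\,x_m)\to\infty$. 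No probabilistic input beyond the almost-sure statement $|x_n|/n\to l>0$ is required, and in particular neither the Markov property nor the entropy plays any role here; entropy enters the present paper only later, in the dimension estimates of Section~3. You should also note that the theorem as stated tacitly assumes $l>0$ (this is how it is applied everywhere in the paper): for $l=0$ there is in general no convergence to $\partial G$, so any proof you write should make that assumption explicit.
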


This property is often called the geodesic tracking property.
By this theorem, the boundary map $(x_{n})_{n \in \Z_{+}} \mapsto \xi_{x} \in \partial G$ is well-defined for almost every sample path $x = (x_{n})_{n \in \Z_{+}}$ and written as $\bnd \colon \Gamma^{\Z_{+}} \to \partial G$. The harmonic measure $\nu$ of $X$ is defined as $\bnd_{*}\mathbb{P}$. 

\section{General Estimates for Dimensions}

The goal of this section is to prove the following two general estimates. Note that both of them have already been shown for random walks driven by a single measure on hyperbolic groups in \cite{MR2286061} and \cite{MR3893268}, respectively. Let $\Gamma$ denote a nonelementary $\delta$-hyperbolic group and $G$ denote its Cayley graph.

\begin{theorem}\label{genupperbound}
     Let $X$ be a regular and nearest neighbor Markov chain having positive speed on $G$. Then, for $\nu$-almost every $\xi$, we have
    \[
    \limsup_{r \to 0} \frac{\log \nu(B(\xi, r))}{\log r} \le \frac{h}{l},
    \]
    where $h$ and $l$ denote the entropy and the speed of $X$, respectively. In particular, $\dim \nu \le \frac{h}{l}$.
\end{theorem}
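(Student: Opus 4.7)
The plan is to run Kaimanovich's ``ray approximation'' strategy: for $\mathbb{P}$-almost every sample path $x = (x_n)$ with boundary limit $\xi_x$, exhibit a sequence of balls around $\xi_x$ of $\nu$-measure at least $e^{-(h+\epsilon)n}$ and radius comparable to $e^{-(l-\epsilon)n}$; since $\nu = \bnd_{*}\mathbb{P}$ and the essential supremum characterization of $\dim \nu$ from Lemma \ref{Frostman} applies, controlling $\xi = \xi_x$ for $\mathbb{P}$-a.e.\ $x$ suffices.

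Fix $\epsilon > 0$. Regularity of $X$ and Theorem \ref{convtoboundaary} together imply that for $\mathbb{P}$-a.e.\ $x$ there exists $N(x)$ such that, for all $n \ge N(x)$,
\[
p^n(1, x_n) \ge e^{-(h+\epsilon)n}, \quad (l-\epsilon)n \le |x_n| \le (l+\epsilon)n, \quad d_G\bigl(x_n,\xi_x(ln)\bigr) \le \epsilon n.
\]
Pick $y_n$ on the geodesic $[1,\xi_x)$ with $|y_n|$ close to $|x_n|$, so that $d_G(x_n,y_n) \le 2\epsilon n$. By Lemma \ref{shadowball}, $S(y_n, R) \subset B(\xi_x, Ce^{-|y_n|+R})$ for every $R \ge R_0$. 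Combined with the Markov decomposition
\[
\nu(A) = \sum_{z \in \Gamma} p^n(1, z)\,\nu_z(A) \ge p^n(1, x_n)\,\nu_{x_n}(A),
\]
applied to $A = S(y_n, R)$, this gives
\[
\nu\bigl(B(\xi_x, Ce^{-|y_n|+R})\bigr) \ge e^{-(h+\epsilon)n}\,\nu_{x_n}\bigl(S(y_n, R)\bigr).
\]

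The main obstacle is to lower bound $\nu_{x_n}(S(y_n, R))$ by a quantity of the form $e^{-o(n)}$ for a suitable $R = R(n)$ growing subpolynomially in $n$. The qualitative input is the almost sure weak convergence of the measure-valued martingale $\nu_{X_n}$ to $\delta_{\xi_x}$, which says that the mass of $\nu_{x_n}$ concentrates near $\xi_x$; making this quantitative requires combining the positive drift of the walk continuation from $x_n$ with thin-triangle geometry to control how small a shadow from $1$ already traps most of the mass of $\nu_{x_n}$. This is the technical heart of the argument and the step most sensitive to the generality of the Markov chain; for random walks driven by a single measure it is the classical ``positive measure of shadows'' estimate used in \cite{MR2286061} and \cite{MR3893268}, and I expect an analogous argument, keyed to positive speed rather than group invariance, to go through here.

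Once this estimate is in hand, taking logarithms along $r_n := Ce^{-|y_n|+R_n}$ yields
\[
\frac{\log \nu(B(\xi_x, r_n))}{\log r_n} \le \frac{h+\epsilon}{l-\epsilon} + o(1),
\]
so letting $\epsilon \to 0$ gives the desired bound $h/l$ along the subsequence $(r_n)$. The nearest-neighbor hypothesis forces $\bigl\lvert |x_{n+1}| - |x_n| \bigr\rvert \le 1$, so consecutive $r_n$ differ by a bounded factor; monotonicity of $r \mapsto \nu(B(\xi_x, r))$ fills the gaps and promotes the bound to $\limsup_{r \to 0}$, completing the proof and, via Lemma \ref{Frostman}, yielding $\dim \nu \le h/l$.
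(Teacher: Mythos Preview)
Your proposal has a genuine gap at precisely the point you flag as the ``main obstacle'': the lower bound $\nu_{x_n}(S(y_n, R)) \ge e^{-o(n)}$. In the single-measure setting the shadow estimate you cite rests on group invariance through the identity $\nu_g = g_*\nu_1$, which reduces everything to a uniform estimate for $\nu_1$ on shadows based at points along a geodesic. For a general regular Markov chain no such relation is available: the harmonic measure $\nu_{x_n}$ from $x_n$ has no a priori connection to shadows seen from $1$, and positive speed alone does not yield any quantitative rate at which $\nu_{X_n}$ concentrates near $\xi_x$. Your hope that ``an analogous argument, keyed to positive speed rather than group invariance'' goes through is not substantiated, and I do not see how to carry it out in this generality.

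The paper's proof avoids $\nu_{x_n}$ entirely. Rather than decomposing $\nu$ via the Markov property at time $n$, it fixes a good set $A_\epsilon$ of paths (those with $(x_{n+1}\,|\,x_n) \ge (l-\epsilon)n$ and $p^n(1,x_n) \ge e^{-(h+\epsilon)n}$ for all large $n$) and studies the conditional probabilities $\mathbb{P}(A_\epsilon \mid x_n = z_n)$. By the Markov property and martingale convergence for the tail filtration, these converge to $\mathbb{P}(A_\epsilon \mid \textbf{tail}(z))$; on the set $A'_\epsilon = \{z \in A_\epsilon : \mathbb{P}(A_\epsilon \mid \textbf{tail}(z)) > \epsilon\}$, which has $\mathbb{P}$-measure at least $1-2\epsilon$, one therefore gets $\mathbb{P}(A_\epsilon \cap \{x_n = z_n\}) \ge c\, p^n(1, z_n)$ for all large $n$. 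Since every path in $A_\epsilon$ passing through $z_n$ has boundary point in $B(\xi_z, Ce^{-(l-\epsilon)n})$ by the Gromov product condition, this yields $\nu(B(\xi_z, Ce^{-(l-\epsilon)n})) \ge c\, e^{-(h+\epsilon)n}$ directly, with no appeal to $\nu_{z_n}$. The tail-conditioning step is exactly what substitutes for group invariance here, and it is the idea your outline is missing.
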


\begin{theorem}\label{genlowerbound}
    Let $X$ be a regular and nearest neighbor Markov chain having positive speed on $G$. Then, for every $\epsilon > 0$, the following subset $D_{\epsilon}$ is $\nu$-positive:
    \[
    D_{\epsilon} := \biggl\{ \xi \in \partial G : \liminf_{r \to 0} \frac{\log \nu(B(\xi, r))}{\log r} \ge \frac{h}{l} - \epsilon \biggr\}.
    \]
\end{theorem}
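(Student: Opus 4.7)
The plan is to follow the proof of Theorem 3.3 in \cite{MR3893268}, adapting each step so that it relies only on the regularity hypothesis on $X$. For $\mathbb{P}$-almost every sample path $\omega = (x_n)_{n \in \Z_+}$, regularity and Theorem \ref{convtoboundaary} give $-\log p^n(1,x_n)/n \to h$, $\abs{x_n}/n \to l$, and $d_G(x_n,\xi_\omega(ln))/n \to 0$. By Egorov's theorem, for any small $\eta > 0$ there exist $F \subset \Gamma^{\Z_+}$ with $\mathbb{P}(F) \geq 1/2$ and $N \in \N$ such that the three convergences hold uniformly with tolerance $\eta$ on $F$ for $n \geq N$.

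Next I would use $\delta$-hyperbolicity together with Lemma \ref{shadowball} to sandwich balls around $\xi_\omega$ by shadows of trajectory points. Geodesic tracking places $\xi_\omega$ in a shadow $S(x_n, R_n)$ with $R_n = O(\eta n)$, and for $r$ small the choice $n = n(r)$ with $\abs{x_n} \approx \abs{\log r}$ (so $n \approx \abs{\log r}/l$ on $F$) yields $B(\xi_\omega, r) \subset S(x_n, R'_n)$ for some $R'_n = O(\eta n)$.

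The heart of the argument is the shadow upper bound
\[
\nu(S(x_n, R'_n)) \leq e^{-n(h - c\eta)}
\]
for $\omega \in F$, $n \geq N$, and a constant $c$ depending only on $\delta$ and $G$. I would write the left side as $\mathbb{P}(\xi_X \in S(x_n, R'_n))$ and observe, via geodesic tracking, hyperbolicity, and the almost-sure convergence $\abs{X_k}/k \to l$ for the walk $X$, that any sample path whose boundary limit lies in this shadow must visit a bounded neighborhood of $x_n$ at some time $k$ with $\abs{k-n} \leq c\eta n$. A union bound then gives $\nu(S(x_n, R'_n)) \leq \sum_k \sum_y p^k(1, y)$, where the sum ranges over the time window and a fixed-radius neighborhood of $x_n$; the entropy bound (after an averaging argument to upgrade the almost-sure endpoint bound to a bound on $p^k(1,y)$ for $y$ near $x_n$) controls each transition probability by $e^{-k(h - \eta)}$, and the polynomial number of summands is harmless. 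Combining this with the sandwich of the previous step yields $\nu(B(\xi_\omega, r)) \leq r^{h/l - \epsilon}$ once $\eta$ is chosen small in terms of $\epsilon$, so $\bnd(F) \subset D_\epsilon$ and $\nu(D_\epsilon) \geq \mathbb{P}(F) > 0$.

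The main obstacle is the shadow upper bound, specifically transferring the almost-sure entropy bound at the endpoint $x_n$ of the trajectory $\omega$ to a uniform bound on $p^k(1,y)$ for $k$ near $n$ and $y$ in a fixed-radius neighborhood of $x_n$. For single-measure walks this follows from convolution identities or Ancona-type estimates; in the generality of a regular Markov chain one must argue entirely from the Shannon--McMillan--Breiman-type limit defining regularity, possibly by averaging over $\omega$ and then thinning $F$ further so that the bound holds uniformly on a neighborhood. A companion subtlety is keeping the time window $\abs{k-n}$ of size $o(n)$ rather than $O(n)$, so that the polynomial prefactor from the union bound does not overwhelm the exponential gain $e^{-hn}$; the uniform convergence of $\abs{x_n}/n \to l$ on $F$ is exactly what supplies this.
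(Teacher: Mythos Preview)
The gap you identify yourself is real, and the fix you sketch does not work. Regularity gives $p^{n}(1,x_{n}) \le e^{-(h-\eta)n}$ eventually along $\mathbb{P}$-almost every trajectory, but it does \emph{not} give $p^{k}(1,y) \le e^{-k(h-\eta)}$ for arbitrary $y$ in a neighborhood of a trajectory point: the quantity $p^{k}(1,y)$ is deterministic once the kernel is fixed, so ``averaging over $\omega$ and thinning $F$'' cannot improve it. What regularity does control is the total mass of the ``heavy'' atoms, i.e.\ $\mathbb{P}\bigl(p^{k}(1,x_{k}) > e^{-(h-\eta)k}\bigr) \to 0$; but to exploit this for the shadow bound you would need that mass to be small \emph{conditionally on $\xi_{x}$ lying in the shadow}, and nothing in your setup provides that. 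The same issue arises one step earlier: the claim that a path with $\xi_{x} \in S(x_{n},R'_{n})$ must visit near $x_{n}$ at a time $k$ close to $n$ already requires that path to have good speed and tracking, so you need a good/bad decomposition of the paths landing in the shadow and a way to control the bad part.

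The paper handles both issues simultaneously with one device: it does \emph{not} try to bound $\nu(B(\xi,r))$, but only $\nu\bigl(F_{\epsilon}\cap B(\xi,r)\bigr)$ for a set $F_{\epsilon}\subset\partial G$ built from the disintegration $(\mathbb{P}^{\xi})_{\xi}$ of $\mathbb{P}$ over $\bnd$. With $A_{\epsilon}$ the event that the path has good tracking \emph{and} satisfies $p^{n}(1,x_{n})\le e^{-(h-\epsilon)n}$ for all $n\ge N_{\epsilon}$, one sets $F_{\epsilon}=\{\xi:\mathbb{P}^{\xi}(A_{\epsilon})\ge\epsilon\}$. Then for $\xi\in F_{\epsilon}$ the bad paths in $A_{\epsilon}^{c}$ carry conditional mass at most $1-\epsilon$, which yields
\[
\epsilon\,\nu\bigl(F_{\epsilon}\cap S(y_{n},R)\bigr)\ \le\ \mathbb{P}\bigl(A_{\epsilon}\cap\{x_{n}\in B(y_{n},2\epsilon n+C)\}\bigr)
\]
after absorbing the $A_{\epsilon}^{c}$ contribution into the left side. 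On the right, the entropy bound $p^{n}(1,x_{n})\le e^{-(h-\epsilon)n}$ holds \emph{by definition of $A_{\epsilon}$}, so the right side is at most $d^{\,2\epsilon n+C}e^{-(h-\epsilon)n}$: no uniform bound on $p^{k}(1,y)$ is ever needed, only the bound along good trajectories at the single time $n$. This gives $\dim\nu|_{F_{\epsilon}}\ge h/l-\epsilon$, whence $\dim\nu\ge h/l-\epsilon$, and then Lemma~\ref{Frostman} forces $\nu(D_{\epsilon})>0$. Two minor corrections to your outline: the relevant neighborhood has radius $O(\epsilon n)$, not bounded radius (the factor $d^{O(\epsilon n)}$ is harmless), and no time window is needed since one works at time exactly $n$.
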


\begin{proof}[Proof of Theorem \ref{genupperbound}.]
    We follow Section 2 in \cite{MR2286061}. Let $\mathbb{P}$ denote the probability on $\Gamma^{\Z_{+}}$ defined as the distribution of $X$. For $\epsilon > 0$ and $N \in \N$, we define the set 
    \[
    A_{\epsilon, N} = \{ x \in \Gamma^{\Z_{+}} \colon (x_{n+1} \,|\, x_{n}) \ge (l-\epsilon)n \, \, \text{and} \, \, p^{n}(1, x_{n}) \ge e^{-(h+\epsilon)n} \, \, \text{for every} \, \, n \ge N \}.
    \]
    Then, for each $\epsilon$, there exists $N_{\epsilon}$ such that $\mathbb{P}(A_{\epsilon, N_{\epsilon}}) \ge 1-\epsilon$ by the regularity of $X$. Let $A_{\epsilon}$ denote this $A_{\epsilon, N_{\epsilon}}$. We first show that the limit
    \[
    \lim_{n \to \infty} \mathbb{P}(A_{\epsilon}\,|\, \{x_{n}=z_{n}\})
    \]
    exists for almost every $z \in A_{\epsilon}$. This follows from the Markov property of $X$. Indeed, for $n > N_{\epsilon}$, we have
    \begin{align*}
        \lim_{n \to \infty}\mathbb{P}(A_{\epsilon}\,|\,\{x_{n}=z_{n}\}) 
       &= \lim_{n \to \infty} \mathbb{P}(A_{\epsilon}\,|\,\{x_{m}=z_{m} \, \, \text{for every} \, \, m \ge n \}) \\
       &= \mathbb{P}(A_{\epsilon}\,|\,\textbf{tail}(z))
    \end{align*}
    by the martingale convergence theorem, where $\textbf{tail}$ denotes the projection to the tail boundary. We define $A'_{\epsilon} = \{ z \in A_{\epsilon} \colon \mathbb{P}(A_{\epsilon}\,|\,\textbf{tail}(z)) > \epsilon \}$. Then, we have 
    \begin{align*}
        1-\epsilon 
        &\le \mathbb{P}(A_{\epsilon}) = \int \mathbb{P}(A_{\epsilon}\,|\,\textbf{tail}(x)) d\mathbb{P}(x) \\
        &= \int_{A'_{\epsilon}} \, \mathbb{P}(A_{\epsilon}\,|\,\textbf{tail}(x)) d\mathbb{P}(x) + \int_{(A'_{\epsilon})^{c}} \, \mathbb{P}(A_{\epsilon}\,|\,\textbf{tail}(x)) d\mathbb{P}(x) \\
        &\le \mathbb{P}(A'_{\epsilon}) + \epsilon
    \end{align*}
    and hence $\mathbb{P}(A'_{\epsilon}) \ge 1-2\epsilon$. Recall that for $z \in A_{\epsilon}$, there exists $C > 0$ such that 
    \[
    A_{\epsilon} \cap \{x_{n} = z_{n}\} \subset \{ x \in \Gamma^{\Z_{+}} \colon \xi_{x} \in B(\xi_{z}, Ce^{-(l-\epsilon)n})\}
    \]
    for every $n \ge N_{\epsilon}$. Then, for $z \in A'_{\epsilon}$, we have 
    \begin{align*}
        \limsup_{r \to 0} \frac{\log \nu(B(\xi_{z}, r))}{\log r} 
        &= \limsup_{n \to \infty} \frac{\log \nu(B(\xi_{z}, Ce^{-(l-\epsilon)n}))}{-(l-\epsilon)n} \le \limsup_{n \to \infty} \frac{\log \mathbb{P}(A'_{\epsilon} \cap \{x_{n} = z_{n}\})}{-(l-\epsilon)n} \\
        &= \limsup_{n \to \infty} \frac{\log \mathbb{P}(\{x_{n}=z_{n}\})}{-(l-\epsilon)n} = \limsup_{n \to \infty} \frac{\log p^{n}(1, z_{n})}{-(l-\epsilon)n} \\
        &\le \frac{h+\epsilon}{l-\epsilon}
    \end{align*}
    by combining the above estimates. Since $\epsilon$ can be taken arbitrarily and $\mathbb{P}(A'_{\epsilon}) > 1-2\epsilon$, this implies  
    \[
    \limsup_{r \to 0} \frac{\log \nu(B(\xi, r))}{\log r} \le \frac{h}{l}
    \]
    for $\nu$-almost every $\xi$.
    We also have $\dim \nu \le \frac{h}{l}$ by Lemma \ref{Frostman}. 
\end{proof}

Next, we give the proof of Theorem \ref{genlowerbound}, following Theorem 3.3 in \cite{MR3893268}. In the proof, we consider the conditional measures associated with the factor map $\bnd \colon (\Gamma^{\Z_{+}}, \mathbb{P}) \to (\partial G, \nu)$. More explicitly, that is a family of probability measures $(\mathbb{P}^{\xi})_{\xi \in \partial G}$ on $\Gamma^{\Z_{+}}$ such that
  \[
  \mathbb{P} = \int_{\partial G} \mathbb{P}^{\xi} \, \, d\nu.
  \]

\begin{proof}[Proof of Theorem \ref{genlowerbound}.]
    It is enough to prove that there exists a $\nu$-positive set $F_{\epsilon}$ such that
    \[
    \liminf_{r \to 0} \frac{\log \nu(F_{\epsilon} \cap B(\xi, r))}{\log r} \ge \frac{h}{l} - \epsilon 
    \]
    for $\nu$-almost every $\xi \in \partial G$. Indeed, for such $F_{\epsilon}$, we have $\dim \nu \ge \dim \nu|_{F_{\epsilon}} \ge \frac{h}{l} - \epsilon$ by the definition of Hausdorff dimensions and Lemma \ref{Frostman}. Then, by Lemma \ref{Frostman}, it implies that $D_{\epsilon}$ is positive. In the rest of the proof, we construct such $F_{\epsilon}$. First, we define $A_{\epsilon, N}$ as follows:
    \[
    A_{\epsilon, N} = \{ x \in \Gamma^{\Z_{+}} \colon d(x_{n}, \xi_{x}(ln)) \le \epsilon n \, \, \text{and} \, \, p^{n}(1, x_{n}) \le e^{-(h-\epsilon)n} \, \, \text{for every} \, \, n \ge N \}
    \]
    for $\epsilon > 0$ and $N \in \N$. Then, for every $\epsilon > 0$, there exists $N_{\epsilon}$ such that $\mathbb{P}(A_{\epsilon, N_{\epsilon}}) \ge 1-\epsilon$ by regularity of $X$ and Theorem \ref{convtoboundaary}. Let $A_{\epsilon} = A_{\epsilon, N_{\epsilon}}$ and $F_{\epsilon} = \{ \xi \, \colon \, \mathbb{P}^{\xi}(A_{\epsilon}) \ge \epsilon \}$. Then we can estimate the size of $F_{\epsilon}$ as follows: 
    \begin{align*}
    1- \epsilon
    &\le \mathbb{P}(A_{\epsilon}) = \int \mathbb{P}^{\xi}(A_{\epsilon}) \, d\nu(\xi) \\
    &= \int_{F_{\epsilon}} \, \mathbb{P}^{\xi}(A_{\epsilon}) \, d\nu(\xi) + \int_{F^{c}_{\epsilon}} \, \mathbb{P}^{\xi}(A_{\epsilon}) \, d\nu(\xi) \\
    &\le \nu(F_{\epsilon}) + \epsilon
    \end{align*}
    and hence $\nu(F_{\epsilon}) \ge 1-2\epsilon$. Let $N \ge N_{\epsilon}$ and $R > \max\{4\delta, R_{0}\}$, where $R_{0}$ is the constant in Lemma \ref{shadowball}. For $z \in A_{\epsilon, N}$, we define $y_{n}$ as $\xi_{z}(ln)$. Then, by Lemma 3.6 in \cite{MR3893268} (note that the assumption $R > 4\delta$ is used here), there exists $C > 0$ such that for all $n \ge N$ and $x \in A_{\epsilon}$, $x_{n}$ belongs to the ball $B(y_{n}, 2\epsilon n + C)$ if $\xi_{x} \in S(y_{n}, R)$. Therefore, we have 
    \begin{align*}
    \mathbb{P}(\xi_{x} \in F_{\epsilon} \cap S(y_{n}, R)) 
    &\le \mathbb{P}(A_{\epsilon} \cap \{ x_{n} \in B(y_{n}, 2\epsilon n + C) \}) + \mathbb{P}(A^{c}_{\epsilon} \cap \{ \xi_{x} \in F_{\epsilon} \cap S(y_{n}, R) \}) \\
    &\le \mathbb{P}(A_{\epsilon} \cap \{ x_{n} \in B(y_{n}, 2\epsilon n + C) \}) + (1-\epsilon)\nu(F_{\epsilon} \cap S(y_{n}, R))
    \end{align*}
    for every $n \ge N$.
    The last inequality follows from the construction of $F_{\epsilon}$. Then we have 
    \begin{align*}
    \epsilon \nu(F_{\epsilon} \cap S(y_{n}, R)) 
    &\le \mathbb{P}(A_{\epsilon} \cap \{ x_{n} \in B(y_{n}, 2\epsilon n + C) \}) \\
    &\le d^{2\epsilon n + C} \times e^{-(h-\epsilon)n},
    \end{align*}
    where $d$ is the degree of $G$. 
    From this inequality, we obtain
    \begin{align*}
    \liminf_{n \to \infty} \frac{\log \nu(F_{\epsilon} \cap S(y_{n}, R))}{-ln}
    \ge \liminf_{n \to \infty} \frac{-\log \epsilon -(h-\epsilon)n + (2\epsilon n + C)\log d}{-ln}
    = \frac{h-(1+2\log d)\epsilon}{l}
    \end{align*}
    Further, by applying Lemma \ref{shadowball} to $y_{n}$, which is on a geodesic ray from $1$ to $\xi_{z}$, we have 
    \[
    S(y_{n}, R) \subset B(\xi_{z}, Ce^{-ln+R})
    \]
    and hence
    \begin{align*}
        \liminf_{r \to 0} \frac{\log \nu(F_{\epsilon} \cap B(\xi_{z}, r))}{\log r}
        &= \liminf_{n \to \infty} \frac{\log \nu(F_{\epsilon} \cap B(\xi_{z}, Ce^{-ln + R}))}{\log (Ce^{-ln + R})} 
        = \liminf_{n \to \infty} \frac{\log \nu(F_{\epsilon} \cap B(\xi_{z}, Ce^{-ln + R}))}{-ln} \\
        &\ge \liminf_{n \to \infty} \frac{\log \nu(F_{\epsilon} \cap S(y_{n}, R))}{-ln}
    \end{align*}
    since $C$ and $R$ are positive constants. \\
    \indent Finally, for $z \in A_{\epsilon, N}$, we obtain 
    \begin{align*}
        \liminf_{r \to 0} \frac{\log \nu(F_{\epsilon} \cap B(\xi_{z}, r))}{\log r} 
        &\ge \liminf_{n \to \infty} \frac{\log \nu(F_{\epsilon} \cap S(y_{n}, R))}{-ln} \\
        &\ge \frac{h}{l} - \frac{(1 + 2\log d)\epsilon}{l}
    \end{align*}
    from the above estimates.
    Since the sequence $(A_{\epsilon, N})_{N \ge N_{\epsilon}}$ gives an exhaustion of $\Gamma^{\Z_{+}}$ modulo null sets, we conclude that $F_{\epsilon}$ satisfies the desired property.
\end{proof}

\section{Invariant Random Conductance Models}

In this section, we consider uniformly elliptic random conductance models. They can be seen as bounded random perturbations of $G$, and the proof of the exact dimensionality is simpler than the case of percolation clusters. Readers interested in percolation can skip this section.

\subsection{Ergodic Theory of Random Conductance Models}

\begin{definition}[$\Gamma$-invariant random conductance models]
Let $\mathcal{C}$ be the \textit{space of conductances} $\R_{>0}^{E}$, where $E$ denotes the edge set of $G$. Note that $\Gamma$ acts on $\mathcal{C}$ by translations. A $\Gamma$-\textit{invariant random conductance model} is a $\Gamma$-invariant probability measure on $\mathcal{C}$. We say that a $\Gamma$-invariant random conductance model $\mu$ is \textit{ergodic} if it is ergodic under the $\Gamma$-action. Each element $\omega$ of $\mathcal{C}$ determines a nearest neighbor Markov kernel on $G$ as follows:
\[
p_{\omega}(g, gs) = \frac{\omega(g,gs)}{\sum_{s' \in S} \omega(g,gs')}
\]
for $g \in \Gamma$ and $s \in S$, and other transition probabilities are zero. This kernel is denoted by $P_{\omega}$ and the corresponding Markov chain starting from $1$ is denoted by $X_{\omega}$.  
We call $\mu$ \textit{uniformly elliptic} if $\text{supp}(\mu)$ is contained in $(\alpha, 1/\alpha)^{E}$ for some $\alpha \in (0, 1)$. 
\end{definition}

\begin{definition}[Path bundles]
Let $\mu$ be a $\Gamma$-invariant uniformly elliptic random conductance model on $G$. We define the weighted version $\mu'$ of $\mu$ by 
\[
    d\mu'(\omega)= \frac{\sum_{s \in S} \omega(1,s)}{D} d\mu(\omega),
\]
where $D$ is the expectation of the sum of conductances at $1$.
We define the \textit{path bundle} $\Pi$ by $\mathcal{C} \times \Gamma^{\Z_{+}}$ as a Borel space, and we endow $\Pi$ with the probability measure $\lambda$ defined by 
\[
  \lambda = \int_{\mathcal{C}} \delta_{\omega} \otimes \mathbb{P}_{\omega} d\mu'(\omega),
\]
where $\mathbb{P}_{\omega}$ denotes the distribution of sample paths of $X_{\omega}$, which is a probability measure on $\Gamma^{\Z_{+}}$.
Further, we define the \text{shift map} $T$ on $\Pi$ by $(\omega, (x_{n})_{n \in \Z_{+}}) \mapsto (x_{1}^{-1}\omega, (x_{1}^{-1}x_{n+1})_{n \in \Z_{+}})$.
\end{definition} 

The following result is crucial for our ergodic-theoretic approach. Note that this type of results are quite standard in the context of random walks in random environments. 

\begin{theorem}\label{fundthmRCM}
Let $\mu$ be an ergodic $\Gamma$-invariant random conductance model on $G$. Then the system $(\Pi, \lambda, T)$ is an ergodic probability-measure-preserving system.  \end{theorem}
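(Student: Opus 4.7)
The standard paradigm for this ``environment seen from the walker'' statement has two parts: first, $T$ preserves $\lambda$; second, every $T$-invariant bounded function is $\lambda$-almost everywhere constant. For $T$-invariance, I would compute $\int f \circ T\,d\lambda$ for a bounded measurable $f$: unfolding the definitions and conditioning on $x_{1}=s$, one obtains
\[
\int f\circ T\,d\lambda=\frac{1}{D}\int_{\mathcal{C}}\sum_{s\in S}\omega(1,s)\int f(s^{-1}\omega,y)\,d\mathbb{P}_{s^{-1}\omega}(y)\,d\mu(\omega),
\]
using that conditionally on $x_{1}=s$ the shifted path $(x_{1}^{-1}x_{n+1})_{n}$ has law $\mathbb{P}_{s^{-1}\omega}$, which is the $\Gamma$-equivariance $p_{\omega}(g,gs)=p_{g^{-1}\omega}(1,s)$ of the kernel. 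Substituting $\omega\mapsto s\omega$ by $\Gamma$-invariance of $\mu$, using $(s\omega)(1,s)=\omega(1,s^{-1})$ and the symmetry $S=S^{-1}$, the weight $\sum_{s}\omega(1,s^{-1})=\sum_{s}\omega(1,s)$ reappears, yielding $\int f\,d\lambda$.

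For ergodicity, let $A\subset\Pi$ be $T$-invariant and write $A_{\omega}=\{x:(\omega,x)\in A\}$. Set $h(\omega)=\mathbb{P}_{\omega}(A_{\omega})$. The same conditioning as above, applied to $\mathbb{1}_{A}$, gives
\[
h(\omega)=\sum_{s\in S}p_{\omega}(1,s)\,h(s^{-1}\omega)=(Kh)(\omega),
\]
where $K$ is the ``environment chain'' kernel on $\mathcal{C}$. Since $h$ is bounded and $\mu'$ is $K$-stationary (apply $T$-invariance to functions depending only on $\omega$), Jensen's inequality gives
\[
\int h^{2}\,d\mu'\le\int Kh^{2}\,d\mu'=\int h^{2}\,d\mu',
\]
so equality must hold in Jensen. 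By strict convexity of $x\mapsto x^{2}$, this forces $h(s^{-1}\omega)=h(\omega)$ for $\mu'$-almost every $\omega$ and every $s\in S$ with $p_{\omega}(1,s)>0$. Here uniform ellipticity is crucial: it ensures $p_{\omega}(1,s)>0$ for every $s\in S$, $\mu$-almost surely. Iterating over the countable group $\Gamma$, we conclude that $h$ is $\Gamma$-invariant $\mu'$-a.s.; since $\mu$ and $\mu'$ are mutually absolutely continuous (again by uniform ellipticity), ergodicity of $\mu$ under $\Gamma$ forces $h$ to be $\mu$-a.s.\ equal to some constant $c\in[0,1]$.

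To upgrade $h\equiv c$ to $\lambda(A)\in\{0,1\}$, let $\mathcal{G}_{n}=\sigma(\omega,x_{0},\dots,x_{n})$. The Markov property together with $T^{n}$-invariance of $A$ gives $\mathbb{E}_{\lambda}[\mathbb{1}_{A}\mid\mathcal{G}_{n}]=h(x_{n}^{-1}\omega)=c$ almost surely, so letting $n\to\infty$ and applying L\'evy's $0$--$1$ law yields $\mathbb{1}_{A}=c$ a.s., hence $c\in\{0,1\}$.

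The main obstacle is the passage from $K$-harmonicity of $h$ to its $\Gamma$-invariance: this is exactly where uniform ellipticity enters essentially, by turning the convex-combination identity $h=Kh$ into honest invariance under each generator. For percolation this step fails outright (many $p_{\omega}(1,s)$ vanish), which is why Section 5 must replace this argument with the cluster-relation machinery rather than reduce to the present one.
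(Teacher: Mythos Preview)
Your argument is correct and follows a standard ``environment viewed from the particle'' paradigm, but it takes a genuinely different route from the paper's. The paper (Appendix~A, written for percolation but declared to work verbatim for conductances) passes to the \emph{bilateral} path space, proves invariance by a direct computation similar to yours, and then establishes ergodicity by an approximation argument: it approximates a shift-invariant set $A$ by cylinder events $A_{N}$ depending on finitely many coordinates, uses that $U^{N}A_{N}$ and $U^{-N}A_{N}$ are conditionally independent given $\omega$ to deduce $f_{A}(\omega)\in\{0,1\}$ \emph{first}, and only afterwards invokes ergodicity of the $\Gamma$-action (respectively, of the cluster relation) to conclude. You instead stay unilateral, extract $K$-harmonicity of $h$, upgrade it to $\Gamma$-invariance via Jensen and strict convexity to get $h\equiv c$, and obtain the $0$--$1$ dichotomy afterwards through L\'evy's martingale convergence. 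Both arguments are classical in the RWRE literature; yours is more analytic (harmonic functions, convexity), the paper's more combinatorial (cylinder approximation, conditional independence), and neither is strictly simpler.

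Two minor remarks. First, uniform ellipticity is not actually what guarantees $p_{\omega}(1,s)>0$: since $\mathcal{C}=\R_{>0}^{E}$ by definition, every conductance is strictly positive, so all one-step transition probabilities are automatically positive. Second, your closing claim that the Jensen step ``fails outright'' for percolation is too strong: the same computation there yields $h(s^{-1}\omega)=h(\omega)$ precisely for those $s$ with $\omega(1,s)=1$, which is exactly $\mathcal{R}^{cl}$-invariance of $h$; indistinguishability then finishes the argument just as $\Gamma$-ergodicity does here. So your method adapts to Section~5 with essentially the same substitution the paper makes in its own proof.
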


For the proof, we refer to Appendix A.

This theorem can be used to deduce that $X_{\omega}$ is regular for $\mu$-almost every $\omega$. Further, we can show that the entropy and the speed are positive and constant for $\mu$-almost every $\omega$.

\begin{theorem}\label{entspRCM}
Let $\mu$ be an ergodic $\Gamma$-invariant uniformly elliptic random conductance model. Then for $\mu$-almost every $\omega \in \mathcal{C}$, the random walk $X_{\omega}$ is regular for $\mu$-almost every $\omega \in \mathcal{C}$, and its entropy and speed do not depend on $\omega$ and are positive. 
\end{theorem}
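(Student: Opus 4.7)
The plan is to feed the ergodic probability-measure-preserving system $(\Pi,\lambda,T)$ supplied by Theorem~\ref{fundthmRCM} into Kingman's subadditive ergodic theorem twice, applied respectively to
\[
\phi_n(\omega,x)=\lvert x_n\rvert \quad\text{and}\quad \psi_n(\omega,x)=-\log p^n_\omega(1,x_n).
\]
Iterating the shift gives $T^n(\omega,x)=(x_n^{-1}\omega,(x_n^{-1}x_{n+k})_{k\ge 0})$. The triangle inequality $\lvert x_{n+m}\rvert\le\lvert x_n\rvert+\lvert x_n^{-1}x_{n+m}\rvert$ yields $\phi_{n+m}\le \phi_n+\phi_m\circ T^n$, while the nearest-neighbor assumption makes $\phi_1\le 1$, so Kingman produces a $\lambda$-a.s.\ limit $\phi_n/n\to l$. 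For $\psi_n$, the Markov property $p^{n+m}_\omega(1,x_{n+m})\ge p^n_\omega(1,x_n)\,p^m_{x_n^{-1}\omega}(1,x_n^{-1}x_{n+m})$ (using the $\Gamma$-equivariance $p_{x_n^{-1}\omega}(1,\cdot)=p_\omega(x_n,x_n\cdot)$) gives $\psi_{n+m}\le\psi_n+\psi_m\circ T^n$; uniform ellipticity forces $p_\omega(1,s)\ge\alpha^2/\lvert S\rvert$, so $\psi_1$ is bounded and Kingman gives $\psi_n/n\to h$. Ergodicity of $T$ makes $l$ and $h$ $\lambda$-a.s.\ constant, and disintegrating $\lambda$ along its projection to $\mathcal{C}$ (the Radon-Nikodym derivative $d\mu'/d\mu$ is bounded above and below by uniform ellipticity, so $\mu'$-null and $\mu$-null sets coincide) transfers the pointwise statements to $\mathbb{P}_\omega$ for $\mu$-a.e.\ $\omega$, giving regularity of $X_\omega$ with $\omega$-independent constants $h,l$.

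For positivity, I would use nonamenability of $\Gamma$. Since $P_\omega$ dominates $\alpha^2/\lvert S\rvert$ times the simple random walk kernel, a standard comparison argument promotes Kesten's characterization of nonamenability to a uniform spectral bound $p^{2n}_\omega(1,1)\le\rho^{2n}$ with $\rho<1$ independent of $\omega$. Cauchy-Schwarz then gives $\max_x p^n_\omega(1,x)\le\rho^n$, so the Shannon entropy of $X^\omega_n$ satisfies $H(X^\omega_n)\ge n\log\rho^{-1}$ and hence $h\ge\log\rho^{-1}>0$. Positivity of the speed then follows from Guivarc'h's fundamental inequality $h\le v\cdot l$, where $v=\lim_n n^{-1}\log\lvert B(1,n)\rvert$ is the (finite) exponential growth rate of $\Gamma$, so $l\ge h/v>0$.

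The two Kingman applications are routine once the subadditivity and integrability inputs are in hand, and the determinism of $h,l$ is immediate from Theorem~\ref{fundthmRCM}. The main technical point I expect to wrestle with is the positivity part: transferring Kesten's spectral estimate from the simple random walk to the random family $(P_\omega)_\omega$ with a single $\rho<1$ (via the uniform ellipticity constant $\alpha$), and reconciling the pointwise limit $-\tfrac1n\log p^n_\omega(1,x_n)\to h$ with the normalized Shannon entropy $H(X^\omega_n)/n$ well enough to pass the quantitative lower bound from the latter to the former. Both are standard but deserve a careful write-up; the Guivarc'h inequality is used as a black box.
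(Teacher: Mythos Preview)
Your Kingman argument for regularity and constancy of $h,l$ is exactly what the paper does, with the same subadditive cocycles $\phi_n$ and $\psi_n$ over the ergodic system $(\Pi,\lambda,T)$ from Theorem~\ref{fundthmRCM}.

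For positivity the two routes diverge. The paper establishes $l>0$ first: uniform ellipticity bounds the weighted edge-isoperimetric ratio of $(G,\omega)$ below by $\alpha$ times the unweighted one, so the network is nonamenable, and a black-box citation (Lyons--Peres, \S6.2 of \cite{MR3616205}) gives positive speed for random walks on nonamenable networks of exponential growth; positivity of $h$ then follows from the equivalence $h>0\Leftrightarrow l>0$ (Proposition~3.6 of \cite{MR2994841}). Your route is the mirror image: from the same isoperimetric comparison you extract a uniform spectral gap, push it to $h>0$ via the Shannon entropy, and recover $l>0$ from the fundamental inequality $h\le vl$. Both are correct and both ultimately rest on the same nonamenability-of-the-network observation. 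The paper's version is shorter because it outsources everything to two citations, whereas yours is more self-contained but requires the two bookkeeping points you already flagged---identifying the Kingman limit $h$ with $\lim_n n^{-1}\int H(X^\omega_n)\,d\mu'$ (immediate from the $L^1$ part of Kingman's theorem) and checking that Guivarc'h's inequality survives the averaging over $\omega$ (it does, by reverse Fatou, since $H(X^\omega_n)/n\le\log\lvert S\rvert$ uniformly).
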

    
\begin{proof}
 First, we show that for $\mu$-almost every $\omega$, $X_{\omega}$ is regular and its entropy and speed are constant. For $n \in \Z_{+}$, we define functions on $\Pi$ as follows:
 \begin{itemize}
     \item $\phi_{n}(\omega, x) = \lvert x_{n} \rvert$,
     \item $\psi_{n}(\omega, x) = -\log p^{n}_{\omega}(1, x_{n})$.
 \end{itemize}
 Then we have 
\begin{align*}
    \phi_{m+n}(\omega, x) &= \lvert x_{m+n} \rvert \le \lvert x_{m} \rvert + \lvert x_{m}^{-1}x_{n} \rvert = \phi_{m}(\omega, x) + \phi_{n}(T^{m}(\omega, x))
\end{align*}
and 
\begin{align*}
     \psi_{m+n}(\omega, x) 
     &= -\log p^{m+n}_{\omega}(1, x_{m+n}) \\
     &\le -\log p^{m}_{\omega}(1, x_{m}) p^{n}_{\omega}(x_{m}, x_{m+n}) \\
     &= -\log p^{m}_{\omega}(1, x_{m}) - \log p^{n}_{x_{m}^{-1}\omega}(1, x_{m}^{-1}x_{m+n}) \\
     &= \psi_{m}(\omega, x) + \psi_{n}(T^{m}(\omega, x)).
\end{align*}  
Therefore, we can apply Kingman's ergodic theorem to $\phi_{n}$ and $\psi_{n}$, and hence for $\mu$-almost every $\omega$, $X_{\omega}$ is regular and its entropy and speed are constant. We show that they are positive in the rest of the proof. It is known that the entropy is positive if and only if the speed is positive (see Proposition 3.6 in \cite{MR2994841} for example). Therefore it is enough to show that the speed is positive. For a subset $B$ of the vertices of $G$, we define its edge-boundary $\partial_{E}(B)$ by 
\[
\partial_{E}(B) = \{ e \in E \colon e \, \, \text{connects a vertex in} \, \, B \, \, \text{and a vertex in} \, \, B^{c} \}.
\]
Then, for $\omega \in \mathcal{C}$ and every nonempty finite subset $K$ of the vertices of $G$, we have
\[
\frac{\sum_{e \in \partial_{E}(K)} \omega(e)}{\lvert K \rvert} \ge \alpha \frac{\lvert \partial_{E}(K) \rvert}{\lvert K \rvert} 
\]
where $\alpha > 0$ is the lower bound for $\omega(e)$, guaranteed by the uniform ellipticity. Since $G$ is nonamenable, this implies that the network determined by $\omega$ is nonamenable. Then, the claim follows from the fact that the random walk on a nonamenable network with exponential growth (note that $G$ has exponential growth since $\Gamma$ is nonamenable) has positive speed (see Section 6.2 in \cite{MR3616205}). 
\end{proof}

Then, we can consider the harmonic measures (Theorem \ref{convtoboundaary}) and the ray bundle in the above setting, as follows.

\begin{definition}[Ray bundles]
    The \textit{ray bundle} $\Xi$ is $\mathcal{C} \times \partial G$ as a Borel space, and we consider the probability measure $\eta$ on $\Xi$ by
    \[
    \eta = \int_{\mathcal{C}} \delta_{\omega} \otimes \nu_{\omega} d\mu'(\omega),
    \]
    where $\nu_{\omega}$ denotes the harmonic measure associated with $X_{\omega}$ for $\mu$-almost every $\omega$. Note that $\eta$ is equal to the pushforward measure $(\id \times \bnd)_{*}\lambda$.
\end{definition}

\begin{lemma}\label{diagerg}
    The diagonal action of $\Gamma$ on $\Xi = \mathcal{C} \times \partial G$ is ergodic with respect to $\eta$.
\end{lemma}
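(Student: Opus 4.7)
The plan is to deduce the ergodicity of the diagonal $\Gamma$-action on $(\Xi, \eta)$ from the ergodicity of the shift system $(\Pi, \lambda, T)$ given by Theorem \ref{fundthmRCM}, using the factor map $\id \times \bnd \colon \Pi \to \Xi$, which satisfies $(\id \times \bnd)_{*}\lambda = \eta$ by construction. Concretely, given a diagonally $\Gamma$-invariant Borel set $A \subset \Xi$, I will show that its preimage $\tilde A := (\id \times \bnd)^{-1}(A)$ is $T$-invariant modulo $\lambda$-null sets, so that ergodicity of $T$ yields $\eta(A) = \lambda(\tilde A) \in \{0, 1\}$.

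The core computation is that $\id \times \bnd$ intertwines $T$ with the action of the first step of the walk. For $\lambda$-almost every $(\omega, (x_{n})_{n})$, the boundary map satisfies the equivariance
\[
\bnd\bigl((x_{1}^{-1}x_{n+1})_{n \in \Z_{+}}\bigr) \;=\; x_{1}^{-1}\,\bnd\bigl((x_{n})_{n \in \Z_{+}}\bigr),
\]
because left translation by $x_{1}^{-1}$ extends continuously to $\partial G$, and dropping the initial term of a convergent sequence does not affect its limit. Combined with the definition of $T$, this yields
\[
(\id \times \bnd)\bigl(T(\omega, x)\bigr) \;=\; \bigl(x_{1}^{-1}\omega,\, x_{1}^{-1}\bnd(x)\bigr) \;=\; x_{1}^{-1} \cdot (\id \times \bnd)(\omega, x).
\]
Since $A$ is invariant under the diagonal $\Gamma$-action, $(\omega, \bnd(x)) \in A$ if and only if $T(\omega, x) \in \tilde A$, proving that $\tilde A$ is $T$-invariant.

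Applying Theorem \ref{fundthmRCM} to $\tilde A$ forces $\lambda(\tilde A) \in \{0, 1\}$, and hence $\eta(A) \in \{0, 1\}$, which is the desired ergodicity. The only delicate points are the measurability and equivariance properties of $\bnd$ used above, both of which are essentially immediate from the geodesic tracking property recalled in Theorem \ref{convtoboundaary} together with the continuity of the $\Gamma$-action on $\partial G$. There is no serious obstacle beyond what has already been handled by Theorem \ref{fundthmRCM}; the lemma is essentially a clean translation of shift-ergodicity on the path bundle into $\Gamma$-ergodicity on the ray bundle via the factor map.
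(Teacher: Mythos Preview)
Your proof is correct and follows essentially the same route as the paper: pull back a diagonally $\Gamma$-invariant set along $\id \times \bnd$, observe that the preimage is $T$-invariant, and invoke the ergodicity of $(\Pi,\lambda,T)$ from Theorem~\ref{fundthmRCM}. The only difference is that you spell out the equivariance computation $(\id \times \bnd)\circ T = x_{1}^{-1}\cdot(\id \times \bnd)$ explicitly, whereas the paper leaves this to the phrase ``by the definition of $T$''.
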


\begin{proof}
    Let $A$ be a $\Gamma$-invariant subset of $\Xi$. Since $\eta$ is equal to the pushforward measure $(\id \times \bnd)_{*}\lambda$, it is enough to prove that the inverse image of $A$ under $\id \times \bnd$ is $\lambda$-null or $\lambda$-conull. Note that the $\Gamma$-invariance of $A$ implies the $T$-invariance of $(\id \times \bnd)^{-1}(A)$ by the definition of $T$. Since $T$ is ergodic with respect to $\lambda$ by the Theorem \ref{fundthmRCM}, it implies the claim. 
\end{proof}


\subsection{The Dimension Formula for Invariant Random Conductance Models}

In this subsection, we prove the main theorem for random conductance models as above. First, we have the following two results:

\begin{theorem}\label{RCMupperbound}
    Let $\mu$ be an ergodic $\Gamma$-invariant uniformly elliptic random conductance model on $G$. Then, for $\mu$-almost every $\omega \in \mathcal{C}$ and $\nu_{\omega}$-almost every $\xi$, we have
    \[
    \limsup_{r \to 0} \frac{\log \nu_{\omega}(B(\xi, r))}{\log r} \le \frac{h}{l},
    \]
    where $h$ and $l$ denote the entropy and the speed of $X_{\omega}$, respectively. In particular, $\dim \nu_{\omega} \le \frac{h}{l}$.
\end{theorem}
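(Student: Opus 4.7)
The plan is to reduce this statement to the general estimate proved in Theorem \ref{genupperbound}, which is already stated for an arbitrary regular nearest-neighbor Markov chain on $G$ having positive speed. The only thing one has to do is to verify, for $\mu$-almost every $\omega$, that $X_\omega$ falls in the scope of that theorem, and then invoke it pointwise in $\omega$.

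Concretely, I would first appeal to Theorem \ref{entspRCM}: under the standing hypotheses on $\mu$ (ergodic, $\Gamma$-invariant, uniformly elliptic), there is a $\mu$-conull set $\Omega_0 \subset \mathcal{C}$ on which $X_\omega$ is regular, its entropy $h$ and speed $l$ exist and are deterministic, and both are strictly positive. Hence for every $\omega \in \Omega_0$ the chain $X_\omega$ is a regular nearest-neighbor Markov chain on $G$ with positive speed $l$, which is exactly the hypothesis of Theorem \ref{genupperbound}.

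Applying Theorem \ref{genupperbound} to $X_\omega$ for each fixed $\omega \in \Omega_0$ yields
\[
\limsup_{r \to 0}\frac{\log \nu_\omega(B(\xi, r))}{\log r} \le \frac{h}{l}
\]
for $\nu_\omega$-almost every $\xi \in \partial G$, with $h$ and $l$ independent of $\omega$. The final clause $\dim \nu_\omega \le h/l$ then follows immediately from Lemma \ref{Frostman}. There is no real obstacle in this argument; the only point requiring a line of care is that the exceptional set in Theorem \ref{genupperbound} depends on the chain, so one should observe that the conclusion holds for each $\omega$ in a fixed $\mu$-conull set, which is exactly what one obtains by running the proof of Theorem \ref{genupperbound} for each $\omega \in \Omega_0$ separately. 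All the genuine work has been carried out in Theorems \ref{genupperbound} and \ref{entspRCM}.
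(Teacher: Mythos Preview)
Your proposal is correct and matches the paper's own proof essentially verbatim: the paper simply states that the result follows from Theorem~\ref{genupperbound} and Theorem~\ref{entspRCM}. Your added remark about the $\omega$-dependence of the exceptional set is accurate but not needed, since Theorem~\ref{genupperbound} is applied separately for each fixed $\omega$ in the $\mu$-conull set supplied by Theorem~\ref{entspRCM}.
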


\begin{proof}
    This follows from Theorem \ref{genupperbound} and Theorem \ref{entspRCM}.
\end{proof}

\begin{theorem}\label{RCMlowerbound}
    Let $\mu$ be an ergodic $\Gamma$-invariant uniformly elliptic random conductance model on $G$. Then, for $\epsilon > 0$ and $\mu$-almost every $\omega \in \mathcal{C}$, the set $D_{\epsilon}$ defined by
    \[
    D_{\epsilon} = \biggl\{ (\omega, \xi) \in \mathcal{C} \times \partial G : \liminf_{r \to 0} \frac{\log \nu_{\omega}(B(\xi, r))}{\log r} \ge \frac{h}{l} - \epsilon \biggr\}
    \]
    is $\eta$-positive.
\end{theorem}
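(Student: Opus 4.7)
The plan is to apply Theorem \ref{genlowerbound} fiberwise over $\mathcal{C}$ and then integrate. By Theorem \ref{entspRCM}, for $\mu$-almost every $\omega \in \mathcal{C}$ the Markov chain $X_\omega$ is regular with entropy $h$ and speed $l$ that are both constant and strictly positive. For any such $\omega$, Theorem \ref{genlowerbound} applied to $X_\omega$ yields $\nu_\omega(D_\epsilon(\omega)) > 0$, where $D_\epsilon(\omega)$ denotes the $\omega$-fiber of $D_\epsilon$,
\[
D_\epsilon(\omega) = \biggl\{ \xi \in \partial G : \liminf_{r \to 0} \frac{\log \nu_\omega(B(\xi,r))}{\log r} \ge \frac{h}{l} - \epsilon \biggr\}.
\]

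Using the defining formula $\eta = \int_{\mathcal{C}} \delta_\omega \otimes \nu_\omega \, d\mu'(\omega)$ and Fubini, I would then obtain
\[
\eta(D_\epsilon) = \int_{\mathcal{C}} \nu_\omega(D_\epsilon(\omega)) \, d\mu'(\omega).
\]
The integrand is strictly positive on a set of full $\mu$-measure by the previous paragraph, and since uniform ellipticity makes the Radon--Nikodym derivative $d\mu'/d\mu = D^{-1}\sum_{s \in S}\omega(1,s)$ bounded away from $0$ and $\infty$, the same set also has full $\mu'$-measure. Hence $\eta(D_\epsilon) > 0$.

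The one technical issue to dispose of, which I expect to be routine rather than the main obstacle, is the joint Borel measurability of the map $(\omega, \xi) \mapsto \liminf_{r \to 0} \log \nu_\omega(B(\xi,r))/\log r$, needed so that $D_\epsilon$ is a Borel subset of $\Xi$ and Fubini applies. This reduces to measurable dependence of $\nu_\omega = \bnd_* \mathbb{P}_\omega$ on $\omega$, which follows from the explicit measurable formula for the kernel $P_\omega$ in terms of $\omega$ and the resulting measurable dependence of $\mathbb{P}_\omega$ on cylinder sets. In contrast to Theorem \ref{intromainpercpositive}, no ergodic-theoretic input is required at this stage: the promotion from $\eta(D_\epsilon) > 0$ to the almost-sure lower bound $\dim \nu_\omega \ge h/l$ will be performed in a subsequent step using the diagonal ergodicity supplied by Lemma \ref{diagerg}, not in the proof of the current statement.
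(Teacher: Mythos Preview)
Your proposal is correct and takes essentially the same approach as the paper: the paper's proof is the single sentence ``This follows from Theorem \ref{genlowerbound} and Theorem \ref{entspRCM},'' and you have simply unpacked that citation by applying Theorem \ref{genlowerbound} to each $X_\omega$ and integrating against $\mu'$. Your additional remarks on measurability and on the role of Lemma \ref{diagerg} in the subsequent step are accurate and not in conflict with the paper's argument.
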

 
\begin{proof}
 This follows from Theorem \ref{genlowerbound} and Theorem \ref{entspRCM}.     
\end{proof}

Combining these theorems with the ergodicity established in the previous section, we have the following result.

\begin{theorem}\label{mainRCM}
    Let $\mu$ be an ergodic $\Gamma$-invariant uniformly elliptic random conductance model on $G$. Then, for $\mu$-almost every $\omega \in \mathcal{C}$, letting $\nu_{\omega}$ be the harmonic measure on $\partial G$ determined by $X_{\omega}$, we have
    \[
    \dim \nu_{\omega} = \lim_{r \to 0} \frac{\log \nu_{\omega}(B(\xi, r))}{\log r} = \frac{h}{l}
    \]
    for $\nu_{\omega}$-almost every $\xi$, where $h$ and $l$ denote the entropy and the speed of $X_{\omega}$, respectively. In particular, $\dim \nu_{\omega}$ is positive and constant for $\mu$-almost every $\omega \in \mathcal{C}$.
\end{theorem}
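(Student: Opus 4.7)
My plan is to combine the two general bounds (Theorems \ref{RCMupperbound} and \ref{RCMlowerbound}) with the diagonal ergodicity of Lemma \ref{diagerg}. The upper bound $\limsup_{r\to 0}\log\nu_\omega(B(\xi,r))/\log r \le h/l$ is immediate from Theorem \ref{RCMupperbound}, so the main task is to promote the positivity of $D_\epsilon$ in Theorem \ref{RCMlowerbound} to conullness, for each $\epsilon>0$. Given this, intersecting $D_{1/n}$ over $n \in \N$ yields $\liminf_{r\to 0}\log\nu_\omega(B(\xi,r))/\log r \ge h/l$ on an $\eta$-conull subset of $\Xi$, which combined with the upper bound gives $\lim = h/l$ $\eta$-almost everywhere. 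Disintegrating $\eta = \int \delta_\omega \otimes \nu_\omega \, d\mu'(\omega)$ then yields the statement for $\mu'$-almost every $\omega$, and since $\mu'$ and $\mu$ are mutually absolutely continuous with densities bounded above and below (by uniform ellipticity), the same holds for $\mu$-almost every $\omega$.

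The key intermediate claim is that, for each $g \in \Gamma$, the diagonal translate $g \cdot D_\epsilon$ coincides with $D_\epsilon$ modulo $\eta$-null sets; once this is in hand, Lemma \ref{diagerg} together with $\eta(D_\epsilon)>0$ forces $\eta(D_\epsilon)=1$. To verify this invariance, I would observe that the transition kernels transform as $p_{g\omega}(x,y)=p_\omega(g^{-1}x,g^{-1}y)$, whence the harmonic measure of $X_{g\omega}$ equals $g_*$ applied to the hitting distribution of the walk on $\omega$ started from $g^{-1}$; denote the latter by $\nu_\omega^{g^{-1}}$. Combining with Lemma \ref{gammaball}, one obtains
\[
\nu_\omega^{g^{-1}}(B(\xi,c^{-1}r)) \,\le\, \nu_{g\omega}(B(g\xi,r)) \,\le\, \nu_\omega^{g^{-1}}(B(\xi,cr))
\]
for a constant $c=c(g)>0$, so it remains to compare $\nu_\omega^{g^{-1}}$ with $\nu_\omega$.

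For the latter comparison I would use uniform ellipticity: the walk from $1$ can be pushed along any fixed geodesic from $1$ to $g^{-1}$ with probability at least $(\alpha^2/|S|)^{|g|}$, which gives $\nu_\omega \ge (\alpha^2/|S|)^{|g|} \, \nu_\omega^{g^{-1}}$ by the strong Markov property; symmetrically, the walk from $g^{-1}$ hits $1$ with at least that probability via the reverse geodesic, yielding the opposite comparison. Consequently $\nu_\omega$ and $\nu_\omega^{g^{-1}}$ are mutually bounded by a constant $C(g)$, so $\log\nu_{g\omega}(B(g\xi,r)) - \log\nu_\omega(B(\xi,r))$ stays bounded as $r \to 0$, and the $\liminf$ of the ratio with $\log r$ is preserved under the diagonal action.

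The main obstacle I anticipate is precisely this mutual comparability $\nu_\omega \asymp \nu_\omega^{g^{-1}}$. It crucially uses uniform ellipticity and is exactly what breaks down for percolation models, where the underlying graph itself is random and a geodesic in $G$ need not survive in $\omega$; this is why Section 5 requires the more sophisticated machinery of cluster equivalence relations rather than the straightforward diagonal-ergodicity argument sufficient here.
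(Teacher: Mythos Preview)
Your proposal is correct and follows essentially the same route as the paper: upper bound from Theorem~\ref{RCMupperbound}, then $\Gamma$-invariance of $D_\epsilon$ combined with Lemma~\ref{diagerg} and Theorem~\ref{RCMlowerbound} to force $\eta(D_\epsilon)=1$. The paper's invariance argument is the same strong-Markov comparison you describe, phrased slightly differently: it shows directly that $(g_*\nu_\omega)(A)\ge \mathbb{P}_\omega\bigl(\bigcup_n\{x_n=g^{-1}\}\bigr)\,\nu_{g\omega}(A)$, which is exactly your one-sided bound $\nu_\omega \ge c\,\nu_\omega^{g^{-1}}$ after the identification $\nu_{g\omega}=g_*\nu_\omega^{g^{-1}}$; only this one direction is needed, and the paper leaves the hitting probability abstract rather than bounding it by $(\alpha^2/|S|)^{|g|}$ via a geodesic. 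Your diagnosis of why this fails for percolation is also on target.
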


\begin{proof}
     We have already shown that the upper bound in Theorem \ref{RCMupperbound}. The lower bound can be deduced from Theorem \ref{RCMlowerbound} as follows. We want to show that $D_{\epsilon}$ is $\eta$-conull for every $\epsilon > 0$. It is enough to prove that $D_{\epsilon}$ is invariant under the diagonal action of $\Gamma$ since the diagonal action of $\Gamma$ is ergodic with respect to $\eta$ (Lemma \ref{diagerg}) and $D_{\epsilon}$ is $\eta$-positive (Theorem \ref{RCMlowerbound}). Let $(\omega, \xi) \in D_{\epsilon}$ and $g \in \Gamma$. For $g \in \Gamma$, let $\mathbb{P}_{\omega, g}$ denote the distribution of sample paths starting from $g$ following $P_{\omega}$. Then, for every Borel subset $A \subset \partial G$, we have 
    \begin{align*}
    (g_{*}\nu_{\omega})(A) 
    &= \mathbb{P}_{\omega}(\{ x \in \Gamma^{\Z_{+}} \colon \xi_{x} \in g^{-1}A\}) \ge \mathbb{P}_{\omega}(\bigcup_{n \in \Z_{+}} \{x_{n} = g^{-1} \} \cap \{ x \in \Gamma^{\Z_{+}} \colon \xi_{x} \in g^{-1}A\}) \\
    &= \mathbb{P}_{\omega}(\bigcup_{n \in \Z_{+}} \{x_{n} = g^{-1} \}) \mathbb{P}_{\omega, g^{-1}}(\{ x \in \Gamma^{\Z_{+}} \colon \xi_{x} \in g^{-1}A\}) \\
    &=  \mathbb{P}_{\omega}(\bigcup_{n \in \Z_{+}} \{x_{n} = g^{-1} \}) \mathbb{P}_{g\omega}(\{ x \in \Gamma^{\Z_{+}} \colon \xi_{x} \in A \})  
    = \mathbb{P}_{\omega}(\bigcup_{n \in \Z_{+}} \{x_{n} = g^{-1} \}) \nu_{g\omega}(A),
    \end{align*}
    where we have used the strong Markov property of $X_{\omega}$ in the second equality. The third equality follows from the equality $\mathbb{P}_{g\omega} = g_{*}\mathbb{P}_{\omega, g^{-1}}$. Further, by Lemma \ref{gammaball}, we have $\nu_{g\omega}(B(g\xi,r)) 
    \le \nu_{g\omega}(gB(\xi,c_{g}r))$, where $c_{g}$ is a constant depending only on $g$. Using these estimates, we have
    \begin{align*}
    \liminf_{r \to 0} \frac{\log \nu_{g\omega}(B(g\xi, r))}{\log r} 
    &\ge \liminf_{r \to 0} \frac{\log (g_{*}\nu_{\omega})(gB(\xi,c_{g}r)) - \log \mathbb{P}_{\omega}(\cup_{n \in \Z_{+}} \{ x_{n} = g^{-1} \}) }{\log r} \\
    &= \liminf_{r \to 0} \frac{\log \nu_{\omega}(B(\xi,c_{g}r))}{\log r} = \liminf_{r \to 0} \frac{\log \nu_{\omega}(B(\xi, r))}{\log r} \\
    &\ge \frac{h}{l} -\epsilon,
    \end{align*}
    and hence $(g\omega, g\xi) \in D_{\epsilon}$, as required.
\end{proof}


\section{Invariant Percolations}

In this section, we consider invariant percolations. The arguments in this section are almost parallel to the case of random conductance models. However, we need some modifications since $\Gamma$ does not act on the space of environments $\Omega_{1}$ in this case. Throughout this section, $\Gamma$ denotes a nonelementary hyperbolic group and $G$ denotes its Cayley graph with respect to some generating set $S$ of $\Gamma$, and $E$ denotes the edge set of $G$.

\subsection{Ergodic Theory of Invariant Percolations}

\begin{definition}[$\Gamma$-invariant percolations]
    A $\Gamma$-\textit{invariant percolation} on $G$ is a $\Gamma$-invariant probability measure on $\{0, 1\}^{E}$. Note that $\Gamma$ acts on $\{0, 1\}^{E}$ by translations. We say that a $\Gamma$-invariant percolation $\mu$ is \textit{ergodic} if it is ergodic under the $\Gamma$-action. We identify an element $\omega \in \{0, 1\}^{E}$ with a spanning subgraph of $G$ such that $e \in \omega$ if and only if $\omega(e) = 1$, and denote by $C_{\omega}(1)$ the connected component (often called cluster) of $\omega$ containing $1$. We define $\Omega_{1}$ by
    \[
    \Omega_{1} := \{ \omega \in \{0, 1\}^{E} \colon C_{\omega}(1) \, \, \text{is infinite} \}.
    \]
    Note that $\Omega_{1}$ is not $\Gamma$-invariant. In the rest of this section, we always assume that $\mu(\Omega_{1})>0$. Note that by the $\Gamma$-invariance of $\mu$, if $\omega$ has an infinite cluster with $\mu$-positive probability, then $\mu(\Omega_{1})>0$.  
    Each $\omega \in \Omega_{1}$ gives a Markov kernel $P_{\omega}$ defined on $C_{\omega}(1)$ as follows:
    \[
    p_{\omega}(g, gs) = \frac{1}{\deg_{C_{\omega}(1)}(g)} 
    \]
    if $g, gs \in C_{\omega}(1)$ and $s \in S$. Since $\omega \in \Omega_{1}$, each $g \in C_{\omega}(1)$ has at least one neighbor in $C_{\omega}(1)$, and hence the denominator is not zero. Let $X_{\omega}$ denote the Markov chain starting from $1$ determined by $P_{\omega}$.  
\end{definition}

Let us introduce cluster relations and indistinguishability, following \cite{MR2221157} and \cite{MR2534099}. They can be used as substitutes for the $\Gamma$-action on the space of environments and its ergodicity, respectively. 

\begin{definition}[Cluster relations and indistinguishability]
    Let $\mu$ be an ergodic $\Gamma$-invariant percolation on $G$ satisfying $\mu(\Omega_{1})>0$. We define the \textit{cluster relation} on $\{0,1\}^{E}$ by
    \[
    \mathcal{R}^{cl} = \{(\omega, g\omega) \in \{0,1\}^{E} \times \{0,1\}^{E} \colon g^{-1} \in C_{\omega}(1) \}.
    \]
    This is actually an equivalence relation on $\{0,1\}^{E}$. Note that for $\omega \in \{0,1\}^{E}$, the $\mathcal{R}^{cl}$-class containing $\omega$ can be identified with the vertices of $C_{\omega}(1)$ via $g\omega \mapsto g^{-1}$.
    We say that $\mu$ has \textit{indistinguishable infinite clusters} if the restricted relation $\mathcal{R}^{cl}|_{\Omega_{1}}$ is ergodic with respect to $\mu$. Recall that a countable Borel equivalence relation $\mathcal{R}$ on a standard probability space $(X, \nu)$ is called \textit{ergodic} if every Borel subset $A \subset X$ satisfying the equation
    \[
    A = \{ x\in X \colon \exists x' \in A, (x, x') \in \mathcal{R} \}
    \]
    is $\nu$-null or $\nu$-conull.
    Note that many invariant percolation models (including supercritical Bernoulli percolations) are known to have indistinguishable infinite clusters \cite{MR1742889}. 
\end{definition}

\begin{remark}
    We have defined the indistinguishability using cluster relations. It is straightforward to check that this definition is equivalent to the original definition in Section 3 of \cite{MR1742889} under the ergodicity of $\mu$. See Proposition 5 in \cite{MR2534099} for details.
\end{remark}

\begin{definition}[Path bundles]
   Let $\mu$ be a $\Gamma$-invariant percolation on $G$. We define the weighted version $\mu'$ of $\mu$ on $\Omega_{1}$ by 
   \[
   d\mu'(\omega)= \frac{\deg_{C_{\omega}(1)}(1)}{D} d\mu(\omega).
   \]
   where $D$ is the expected degree at $1$ on $\Omega_{1}$.
   The \textit{path bundle} $\Pi$ associated with $\mu$ is $\Omega_{1} \times \Gamma^{\Z_{+}}$ as a Borel space, and we endow $\Pi$ with the probability measure $\lambda$ defined by 
   \[
   \lambda = \int_{\Omega_{1}} \delta_{\omega} \otimes \mathbb{P}_{\omega} d\mu'(\omega),
   \]
   where $\mathbb{P}_{\omega}$ denotes the distribution of sample paths of $X_{\omega}$, which is a probability measure on $\Gamma^{\Z_{+}}$. 
   Further, the \textit{shift map} $T$ on $\Pi$ is defined by $(\omega, (x_{n})_{n \in \Z_{+}}) \mapsto (x_{1}^{-1}\omega, (x_{1}^{-1}x_{n+1})_{n \in \Z_{+}})$.
\end{definition}

The following theorem is crucial for our study. 

\begin{theorem}\label{fundthmPerc}
    Let $\mu$ be an ergodic $\Gamma$-invariant percolation on $G$ with indistinguishable infinite clusters. Then the system $(\Pi, \lambda, T)$ is ergodic and measure-preserving.
\end{theorem}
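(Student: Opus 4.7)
The proof splits into measure preservation and ergodicity of $T$ on $(\Pi, \lambda)$.

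For measure preservation, the plan is to verify $\int (\varphi \circ T)\, d\lambda = \int \varphi\, d\lambda$ on cylinder functions $\varphi(\omega, x) = F(\omega) G(x_1, \dots, x_k)$ and then extend by approximation. Applying $T$ once and integrating the first step of the walk, the transition probability $p_\omega(1, s) = 1/\deg_{C_\omega(1)}(1)$ is exactly cancelled by the Radon--Nikodym weight $d\mu'/d\mu = \deg_{C_\omega(1)}(1)/D$. A change of variables $\omega \mapsto s^{-1}\omega$ for each $s \in S$ with $s \in C_\omega(1)$, combined with the $\Gamma$-invariance of $\mu$ and the symmetry of $S$, reconstitutes $\int \varphi\, d\lambda$. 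This is the standard environment-viewed-from-the-walker computation.

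For ergodicity, let $A \subset \Pi$ be a $T$-invariant Borel set. Put $A_\omega = \{x \in \Gamma^{\Z_+} : (\omega, x) \in A\}$ and $\psi(\omega) = \mathbb{P}_\omega(A_\omega)$. The $T^n$-invariance of $\mathbf{1}_A$ combined with the Markov property of $X_\omega$ yields
\[
\mathbb{E}_\omega\bigl[\mathbf{1}_{A_\omega}(x) \bigm| \mathcal{F}_n\bigr] = \psi(x_n^{-1}\omega), \qquad \mathcal{F}_n = \sigma(x_0, \dots, x_n),
\]
and Doob's martingale convergence theorem then gives $\psi(x_n^{-1}\omega) \to \mathbf{1}_{A_\omega}(x) \in \{0,1\}$ $\mathbb{P}_\omega$-almost surely, for $\mu$-almost every $\omega$. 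Consequently $\psi(x_n^{-1}\omega)\bigl(1 - \psi(x_n^{-1}\omega)\bigr) \in [0, 1/4]$ tends to $0$ almost surely; bounded convergence and $T$-invariance of $\lambda$ give
\[
\int_{\Omega_1} \psi(1-\psi)\, d\mu' = \int_\Pi \psi(x_n^{-1}\omega)\bigl(1 - \psi(x_n^{-1}\omega)\bigr)\, d\lambda \longrightarrow 0
\]
as $n \to \infty$, and since the left-hand side is independent of $n$, it equals $0$. Hence $\psi \in \{0, 1\}$ $\mu'$-almost everywhere.

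The remaining step is to show that $\{\psi = 1\}$ is $\mathcal{R}^{cl}|_{\Omega_1}$-invariant modulo $\mu$-null sets; indistinguishability of infinite clusters then yields $\lambda(A) = \int \psi\, d\mu' \in \{0,1\}$. For $\mu$-almost every $\omega$ with $\psi(\omega) = 1$ and each $h \in C_\omega(1)$, the walk $X_\omega$ hits $h$ with positive probability. Applying the strong Markov property at $\tau_h$ identifies the tail of the trajectory past $\tau_h$ with that of $X_{h^{-1}\omega}$ based at $1$, and combined with the $\mathbb{P}_\omega$-almost sure convergence $\psi(x_n^{-1}\omega) \to 1$, this forces $\mathbf{1}_{A_{h^{-1}\omega}} = 1$ $\mathbb{P}_{h^{-1}\omega}$-almost surely, i.e., $\psi(h^{-1}\omega) = 1$. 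The main obstacle of the proof is the bounded-convergence step that upgrades Doob's theorem to a $\{0,1\}$-dichotomy for $\psi$; once this is in hand, the cluster invariance is soft, but the use of indistinguishability in its cluster-relation formulation (the ergodicity of $\mathcal{R}^{cl}|_{\Omega_1}$) is essential, since for $\psi \in (0,1)$ transience of $X_\omega$ on the non-amenable cluster would obstruct any purely harmonic-function argument.
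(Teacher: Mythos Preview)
Your argument is correct. The measure-preservation step is the same environment-seen-from-the-particle computation as in the paper (the paper passes to the bilateral extension first, but this is cosmetic), and the final step showing that $\{\psi=1\}$ is $\mathcal{R}^{cl}|_{\Omega_1}$-invariant via the strong Markov property is also essentially identical.

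Where you differ is in the proof of the $\{0,1\}$-dichotomy for $\psi$. The paper works on the bilateral path space, approximates the invariant event $A$ by a cylinder $A_N$ depending only on $(x_{-N},\dots,x_N)$, and uses that $U^{N}A_N$ and $U^{-N}A_N$ are conditionally independent given $\omega$ to obtain $\lVert f_A - f_A^2 \rVert_1 \le 4\epsilon$. You instead stay in the unilateral system and observe that $T$-invariance of $A$ together with the Markov property makes $n\mapsto \psi(x_n^{-1}\omega)$ a bounded martingale converging to $\mathbf{1}_{A_\omega}$ by L\'evy's zero--one law, and then use $T$-invariance of $\lambda$ to pull $\int \psi(1-\psi)\,d\mu'$ back as a constant sequence tending to $0$. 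Both routes are standard in the RWRE literature; yours has the minor advantage of avoiding the bilateral extension and the cylinder approximation, while the paper's argument makes the role of the two-sided Markov structure (independence of past and future given the environment) more explicit.

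One small clean-up: in your last paragraph, the appeal to ``$\psi(x_n^{-1}\omega)\to 1$'' is not actually needed for the cluster-invariance step. Once $\psi(\omega)=1$, $T$-invariance of $A$ and the Markov property at a fixed time $n$ with $p_\omega^{\,n}(1,h)>0$ already give $\psi(h^{-1}\omega)=1$ exactly, so the set $\{\psi=1\}$ is genuinely $\mathcal{R}^{cl}$-saturated (not merely modulo null sets), matching the paper's argument.
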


Note that this type of result is well known in the context of random walks in random environments (see \cite{MR2278453}, for example). We refer to Appendix A for the proof. \\
\indent This theorem implies that the speed and the entropy of the simple random walks on percolation clusters exist and they are constant $\mu$-almost surely. Note that they may be zero in some invariant percolations like wired uniform spanning forests (see Remark 4.5 in \cite{MR1802426}). However, many interesting examples are known to have positive entropy and speed as indicated in the following result due to Benjamini, Lyons, and Schramm.  

\begin{theorem}[Theorem 4.4 in \cite{MR1802426}]\label{BLS}
    Let $G$ be a Cayley graph of a nonamenable group and $\mu$ be an ergodic $\Gamma$-invariant percolation on $G$ with indistinguishable infinite clusters. Assume one of the following conditions:
    \begin{itemize}
        \item $\mu$ is a supercritical Bernoulli percolation.
        \item $\mu$-almost every $\omega$ has a unique infinite cluster.
        \item $\mu$-almost every $\omega$ has a cluster with at least three ends.
        \item The expected degree at $1$ on $\Omega_{1}$ is larger than $d - i(G)$, where $i(G)$ is the isoperimetric constant of $G$. 
    \end{itemize}
    Then for $\mu$-almost every $\omega \in \Omega_{1}$, the simple random walk $X_{\omega}$ on $C_{\omega}(1)$ is regular and its entropy and speed do not depend on $\omega$ and are positive.
\end{theorem}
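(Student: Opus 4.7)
The plan is to split the proof into two parts. First, I would establish that $X_{\omega}$ is regular with constant entropy and speed for $\mu$-almost every $\omega \in \Omega_{1}$; second, I would show positivity of these quantities under each of the four listed hypotheses. For the first part, the argument mirrors the proof of Theorem \ref{entspRCM}. Define on the path bundle $\Pi$ the functions $\phi_{n}(\omega, x) = \lvert x_{n} \rvert$ and $\psi_{n}(\omega, x) = -\log p^{n}_{\omega}(1, x_{n})$. The triangle inequality gives $\phi_{m+n} \le \phi_{m} + \phi_{n} \circ T^{m}$, and the Markov property together with the definition of $T$ gives the analogous subadditive relation for $\psi_{n}$. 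Since by Theorem \ref{fundthmPerc} the system $(\Pi, \lambda, T)$ is an ergodic probability-measure-preserving system, Kingman's subadditive ergodic theorem yields deterministic limits $l = \lim_{n} \phi_{n}/n$ and $h = \lim_{n} \psi_{n}/n$ $\lambda$-almost surely. Disintegrating $\lambda$ over $\mu'$ (which is equivalent to $\mu$ on $\Omega_{1}$), one obtains that $X_{\omega}$ is regular with entropy $h$ and speed $l$ for $\mu$-almost every $\omega \in \Omega_{1}$.

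For the second part, recall that the positivity of $h$ is equivalent to the positivity of $l$ for nearest-neighbour Markov chains on bounded-degree graphs (Proposition 3.6 of \cite{MR2994841}), so it suffices to prove $l>0$. The unifying strategy is to establish a form of anchored isoperimetric inequality for the infinite cluster $C_{\omega}(1)$ and then invoke a Cheeger-type bound for return probabilities to deduce positive speed. In the supercritical Bernoulli case one would invoke anchored expansion of infinite Bernoulli clusters on nonamenable Cayley graphs. In the unique-infinite-cluster case, uniqueness combined with mass-transport transfers the ambient nonamenability of $G$ to the cluster. In the three-ends case, the branching structure at infinity supplies enough isoperimetry directly through a flow argument. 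Finally, the expected-degree hypothesis asserting $\mathbb{E}_{\mu}[\deg_{C_{\omega}(1)}(1) \mid \Omega_{1}] > d - i(G)$ is a probabilistic Cheeger inequality: by $\Gamma$-invariance the same bound holds at every vertex of $C_{\omega}(1)$, producing abundant retained edges along the boundary of any finite set.

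The main obstacle is unquestionably the positivity of $l$. The subadditive ergodic argument is formal and essentially identical to the random-conductance case, but producing isoperimetric control on $C_{\omega}(1)$ requires genuinely different geometric inputs in each of the four cases, and the Bernoulli case in particular rests on non-trivial results about percolation on nonamenable Cayley graphs. This is precisely why the result is imported wholesale from \cite{MR1802426} rather than reproved in the present paper.
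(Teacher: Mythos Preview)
Your proposal is correct and matches the paper's treatment. The paper does not give a proof of this theorem: immediately after Theorem \ref{fundthmPerc} it remarks that ``this theorem implies that the speed and the entropy of the simple random walks on percolation clusters exist and they are constant $\mu$-almost surely'' (i.e., your Kingman argument for $\phi_n$ and $\psi_n$), and then simply cites \cite{MR1802426} for the positivity under the four listed hypotheses, exactly as you say in your final paragraph.
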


In such cases, we can consider the corresponding harmonic measures by Theorem \ref{convtoboundaary} and define the ray bundle.

\begin{definition}[Ray bundles]\label{def57}
    Let $\mu$ be an ergodic $\Gamma$-invariant percolation on $G$ with indistinguishable infinite clusters. Assume that the corresponding simple random walk has positive speed, i.e., the simple random walk $X_{\omega}$ on $C_{\omega}(1)$ has positive speed for $\mu$-almost every $\omega \in \Omega_{1}$. Note that the simple random walk $X_{\omega}$ determines the corresponding harmonic measure $\nu_{\omega}$ for $\mu$-almost every $\omega \in \Omega_{1}$ by Theorem \ref{convtoboundaary}. The \textit{ray bundle} $\Xi$ associated with $\mu$ is $\Omega_{1} \times \partial G$ as a Borel space, and we endow $\Xi$ with the probability measure $\eta$ defined by 
    \[
    \eta = \int_{\Omega_{1}} \delta_{\omega} \otimes \nu_{\omega} d\mu'(\omega).
    \]
\end{definition}

Next, we define an equivalence relation which substitutes for the diagonal action of $\Gamma$ on the ray bundle and show its ergodicity with respect to $\eta$.

\begin{lemma}\label{relationerg}
     Let $\mu$ be an ergodic $\Gamma$-invariant percolation on $G$ with indistinguishable infinite clusters. Assume that the corresponding simple random walk has positive speed. Then, the following equivalence relation $\mathcal{R}_{1}$ on $\Xi$ is ergodic with respect to $\eta$:
    \[
    \mathcal{R}_{1} = \{((\omega, \xi), (g\omega, g\xi)) \in \Xi^{2} \colon (\omega, g\omega) \in \mathcal{R}^{cl}|_{\Omega_{1}} \}.
    \]
\end{lemma}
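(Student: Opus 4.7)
The plan is to mirror the argument used for Lemma \ref{diagerg}, using $\mathcal{R}_{1}$ as the replacement for the diagonal $\Gamma$-action on $\Xi$ (which cannot be defined globally since $\Omega_{1}$ is not $\Gamma$-invariant). Concretely, let $A \subset \Xi$ be an $\mathcal{R}_{1}$-invariant Borel set. Since $\eta = (\id \times \bnd)_{*}\lambda$, it is enough to prove that $(\id \times \bnd)^{-1}(A) \subset \Pi$ is $\lambda$-null or $\lambda$-conull, and the ergodicity of $(\Pi, \lambda, T)$ supplied by Theorem \ref{fundthmPerc} then reduces this further to showing $T$-invariance of the pullback.

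The key computation is the following compatibility. For $(\omega, x) \in \Pi$, the shift-equivariance of the boundary map gives
\[
(\id \times \bnd)\bigl(T(\omega, x)\bigr) = \bigl(x_{1}^{-1}\omega, \, x_{1}^{-1}\xi_{x}\bigr).
\]
Because the walk is nearest-neighbor in $C_{\omega}(1)$ starting at $1$, we have $x_{1} \in C_{\omega}(1)$, so $(\omega, x_{1}^{-1}\omega) \in \mathcal{R}^{cl}$; moreover $C_{x_{1}^{-1}\omega}(1) = x_{1}^{-1}C_{\omega}(1)$ is infinite, so we remain in $\Omega_{1}$. Therefore
\[
\bigl((\omega, \xi_{x}),\, (x_{1}^{-1}\omega, x_{1}^{-1}\xi_{x})\bigr) \in \mathcal{R}_{1},
\]
and the $\mathcal{R}_{1}$-invariance of $A$ yields $T(\omega, x) \in (\id \times \bnd)^{-1}(A)$ if and only if $(\omega, x) \in (\id \times \bnd)^{-1}(A)$. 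Hence the pullback is $T$-invariant, and the proof is complete by the ergodicity of $T$.

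The substantive input, and the main obstacle in spirit, is Theorem \ref{fundthmPerc}: this is where the indistinguishability of infinite clusters is genuinely used, and it is the true analogue of the ergodicity of the $\Gamma$-action that drove the random conductance argument. Once that theorem is granted, the present lemma is a short piece of bookkeeping, hinging only on the observation that a single step of the walk shifts the environment by an element witnessing membership in $\mathcal{R}^{cl}$, so that the relation $\mathcal{R}_{1}$ is precisely wide enough to capture the dynamics of $T$ on the ray bundle.
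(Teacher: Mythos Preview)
Your proof is correct and follows essentially the same approach as the paper's own proof: pull back an $\mathcal{R}_{1}$-invariant set via $\id \times \bnd$, verify $T$-invariance of the pullback, and invoke Theorem \ref{fundthmPerc}. You have simply made explicit the verification (that one step of the walk yields a pair in $\mathcal{R}^{cl}|_{\Omega_{1}}$) which the paper leaves implicit.
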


\begin{proof}
    Let $A$ be an $\mathcal{R}_{1}$-invariant subset of $\Xi = \Omega_{1} \times \partial G$. Then the inverse image of $A$ under $\id \times \bnd$ is invariant under the shift map $T$. Since $T$ is ergodic with respect to $\lambda$ (Theorem \ref{fundthmPerc}) and $(\id \times \bnd)_{*}\lambda = \eta$, it implies that $A$ is null or conull.
\end{proof}

\subsection{The Dimension Formula for Invariant Percolations}

In this subsection, we prove the main theorem of this paper.

\begin{theorem}\label{mainPerc}
    Let $\mu$ be an ergodic $\Gamma$-invariant percolation with indistinguishable infinite clusters. Assume that the corresponding simple random walk has positive speed. Then, for $\mu$-almost every $\omega \in \Omega_{1}$, letting $\nu_{\omega}$ be the harmonic measure on $\partial G$ determined by $X_{\omega}$, we have
    \[
    \dim \nu_{\omega} = \lim_{r \to 0} \frac{\log \nu_{\omega}(B(\xi, r))}{\log r} = \frac{h}{l}
    \]
    for $\nu_{\omega}$-almost every $\xi \in \partial G$, where $h$ and $l$ denote the entropy and the speed of $X_{\omega}$, respectively. In particular, $\dim \nu_{\omega}$ is positive and constant for $\mu$-almost every $\omega \in \Omega_{1}$.
\end{theorem}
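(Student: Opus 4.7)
The plan is to mimic the proof of Theorem \ref{mainRCM}, with the only substantive change being that the diagonal $\Gamma$-action on $\mathcal{C}\times\partial G$ is replaced by the equivalence relation $\mathcal{R}_{1}$ of Lemma \ref{relationerg}. The upper bound $\limsup_{r\to 0}\log\nu_{\omega}(B(\xi,r))/\log r\le h/l$ is immediate from Theorem \ref{genupperbound} applied pointwise, using that Theorem \ref{fundthmPerc} (via Kingman's subadditive ergodic theorem, as in Theorem \ref{entspRCM}) together with the positive-speed hypothesis yields regularity of $X_{\omega}$ with deterministic $h,l>0$ for $\mu$-a.e.\ $\omega\in\Omega_{1}$.

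For the matching lower bound I would, following Theorem \ref{mainRCM}, fix $\epsilon>0$ and set
\[
D_{\epsilon}=\Bigl\{(\omega,\xi)\in\Xi \colon \liminf_{r\to 0}\frac{\log\nu_{\omega}(B(\xi,r))}{\log r}\ge \frac{h}{l}-\epsilon\Bigr\}.
\]
Theorem \ref{genlowerbound} applied $\omega$-by-$\omega$ shows that the $\xi$-slice of $D_{\epsilon}$ is $\nu_{\omega}$-positive for $\mu$-a.e.\ $\omega$, hence $\eta(D_{\epsilon})>0$. Since Lemma \ref{relationerg} asserts ergodicity of $\mathcal{R}_{1}$ on $(\Xi,\eta)$, it suffices to show that $D_{\epsilon}$ is $\mathcal{R}_{1}$-invariant; then $\eta(D_{\epsilon})=1$, and intersecting over a countable sequence $\epsilon_{n}\to 0$ combined with the upper bound gives the exact formula for $\eta$-a.e.\ $(\omega,\xi)$, hence for $\mu$-a.e.\ $\omega$ and $\nu_{\omega}$-a.e.\ $\xi$.

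The $\mathcal{R}_{1}$-invariance is the only step requiring care, and it is the analogue of the $\Gamma$-equivariance computation in Theorem \ref{mainRCM}. Suppose $(\omega,\xi)\in D_{\epsilon}$ and $g^{-1}\in C_{\omega}(1)$; I claim $(g\omega,g\xi)\in D_{\epsilon}$. Since $C_{g\omega}(1)=g\cdot C_{\omega}(1)$, the simple random walk $X_{g\omega}$ is the $g$-translate of the simple random walk on $C_{\omega}(1)$ started at $g^{-1}$; writing $\mathbb{P}_{\omega,g^{-1}}$ for the latter, one has $\mathbb{P}_{g\omega}=g_{*}\mathbb{P}_{\omega,g^{-1}}$ and therefore $\nu_{g\omega}(A)=\mathbb{P}_{\omega,g^{-1}}(\xi_{x}\in g^{-1}A)$. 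The strong Markov property of $X_{\omega}$ at the first hitting time of $g^{-1}$ (positive because $g^{-1}$ lies in the cluster of the starting point) gives
\begin{align*}
(g_{*}\nu_{\omega})(A)
&\ge \mathbb{P}_{\omega}\Bigl(\bigcup_{n\ge 0}\{x_{n}=g^{-1}\}\cap\{\xi_{x}\in g^{-1}A\}\Bigr)\\
&= \mathbb{P}_{\omega}\Bigl(\bigcup_{n\ge 0}\{x_{n}=g^{-1}\}\Bigr)\cdot\nu_{g\omega}(A),
\end{align*}
with the hitting probability strictly positive. Combining with Lemma \ref{gammaball} yields a constant $c_{g}>0$ and a multiplicative factor independent of $r$ such that $\nu_{g\omega}(B(g\xi,r))\le \mathrm{const}\cdot\nu_{\omega}(B(\xi,c_{g}r))$, and taking $\log$/$\log r$ shows $\liminf_{r\to 0}\log\nu_{g\omega}(B(g\xi,r))/\log r\ge h/l-\epsilon$, as required.

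The hardest part is conceptual rather than technical: one must recognise that $\Omega_{1}$ fails to be $\Gamma$-invariant, so the RCM-style diagonal action cannot be invoked and must be replaced by the cluster relation $\mathcal{R}^{cl}|_{\Omega_{1}}$, whose ergodicity is exactly indistinguishability. Once that substitution is made, every other ingredient (the strong Markov computation, the ball-shadow comparison, the positivity from Theorem \ref{genlowerbound}, the ergodicity from Lemma \ref{relationerg}) is already on the shelf and the argument proceeds verbatim as in Theorem \ref{mainRCM}.
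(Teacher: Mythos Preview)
Your proposal is correct and follows essentially the same route as the paper's proof: the upper bound comes from Theorem~\ref{genupperbound}, the positivity of $D_{\epsilon}$ from Theorem~\ref{genlowerbound}, and the key step is to show $\mathcal{R}_{1}$-invariance of $D_{\epsilon}$ via the strong Markov property and Lemma~\ref{gammaball}, so that ergodicity of $\mathcal{R}_{1}$ (Lemma~\ref{relationerg}) upgrades positivity to conullity. Your computation and the paper's are essentially identical, including the observation that the hitting probability of $g^{-1}$ is strictly positive precisely because $(\omega,g\omega)\in\mathcal{R}^{cl}$.
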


\begin{corollary}
    Let $\mu$ be an ergodic $\Gamma$-invariant percolation with indistinguishable infinite clusters. Assume one of the following conditions:
    \begin{itemize}
        \item $\mu$ is a supercritical Bernoulli percolation.
        \item $\mu$-almost every $\omega$ has a unique infinite cluster.
        \item $\mu$-almost every $\omega$ has a cluster with at least three ends.
        \item The expected degree at $1$ on $\Omega_{1}$ is larger than $d - i(G)$, where $i(G)$ is the isoperimetric constant of $G$. 
    \end{itemize}
    Then, for $\mu$-almost every $\omega \in \Omega_{1}$, letting $\nu_{\omega}$ be the harmonic measure on $\partial G$ determined by $X_{\omega}$, we have
    \[
    \dim \nu_{\omega} = \lim_{r \to 0} \frac{\log \nu_{\omega}(B(\xi, r))}{\log r} = \frac{h}{l}
    \]
    for $\nu_{\omega}$-almost every $\xi \in \partial G$. In particular, $\dim \nu_{\omega}$ is positive and constant for $\mu$-almost every $\omega \in \Omega_{1}$. 
\end{corollary}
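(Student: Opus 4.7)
The plan is to deduce this corollary directly from Theorem~\ref{mainPerc}, using Theorem~\ref{BLS} to supply the positive-speed hypothesis under each of the four listed conditions. Theorem~\ref{mainPerc} requires three ingredients: (i) that $\mu$ is an ergodic $\Gamma$-invariant percolation, (ii) that $\mu$ has indistinguishable infinite clusters, and (iii) that the simple random walk $X_\omega$ on $C_\omega(1)$ has positive speed for $\mu$-almost every $\omega \in \Omega_1$. Conditions (i) and (ii) are explicitly built into the hypotheses of the corollary, so the task reduces to verifying (iii) under each of the four bulleted assumptions.

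For (iii) I would simply invoke Theorem~\ref{BLS}, which is formulated to cover exactly these four scenarios: supercritical Bernoulli percolation, $\mu$-almost sure uniqueness of the infinite cluster, clusters with at least three ends, and the expected-degree condition comparing to the isoperimetric constant $i(G)$. Under any of them, Theorem~\ref{BLS} guarantees that $X_\omega$ is regular with deterministic and strictly positive entropy $h$ and speed $l$ for $\mu$-almost every $\omega \in \Omega_1$, which is precisely what (iii) demands. Note that the positivity of $l$, not just of $h$, is what Theorem~\ref{mainPerc} actually uses, and Theorem~\ref{BLS} delivers both.

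Plugging (i)--(iii) into Theorem~\ref{mainPerc}, we obtain for $\mu$-almost every $\omega \in \Omega_1$ the exact dimensionality of $\nu_\omega$ together with the equality $\dim \nu_\omega = h/l$ for $\nu_\omega$-almost every $\xi \in \partial G$; because $h$ and $l$ are positive deterministic constants by Theorem~\ref{BLS}, the resulting dimension $h/l$ is positive and constant $\mu$-almost surely on $\Omega_1$, yielding both claims of the corollary. There is no genuine obstacle here: the corollary is a bookkeeping statement that couples the abstract Theorem~\ref{mainPerc} with the concrete Benjamini--Lyons--Schramm sufficient conditions for positive speed, and the proof is a single-line citation of these two results.
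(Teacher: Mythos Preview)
Your proposal is correct and matches the paper's own proof, which simply cites Theorem~\ref{BLS} to obtain positive speed under any of the four conditions and then applies Theorem~\ref{mainPerc}.
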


\begin{proof}
    This follows from Theorem \ref{BLS} and Theorem \ref{mainPerc}.
\end{proof}

In the rest of this section, we prove Theorem \ref{mainPerc}. First, we have the following results as in the case of random conductance models.

\begin{theorem}\label{upperboundPerc}
    Let $\mu$ be an ergodic $\Gamma$-invariant percolation on $G$ with indistinguishable infinite clusters. Assume that the corresponding simple random walk has positive speed. Then, for $\mu$-almost every $\omega \in \Omega_{1}$ and for $\nu_{\omega}$-almost every $\xi \in \partial G$, we have
    \[
    \limsup_{r \to 0} \frac{\log \nu_{\omega}(B(\xi, r))}{\log r} \le \frac{h}{l}
    \]
    where $h$ and $l$ are the entropy and the speed of $X_{\omega}$, respectively. In particular, we have $\dim \nu_{\omega} \le \frac{h}{l}$ for $\mu$-almost every $\omega \in \Omega_{1}$.
\end{theorem}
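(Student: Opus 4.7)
The plan is to reduce the claim to the general upper bound in Theorem \ref{genupperbound}. That theorem requires two inputs on $X_\omega$: (i) regularity, i.e., the existence of the entropy and speed as almost-sure limits, and (ii) positive speed. Since (ii) is exactly the standing assumption of the statement, the only work is to verify (i) for $\mu$-almost every $\omega \in \Omega_1$.

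To prove regularity, I would follow the same Kingman subadditive ergodic theorem argument used in Theorem \ref{entspRCM}, now applied to the percolation path bundle $(\Pi, \lambda, T)$, which is ergodic and measure-preserving by Theorem \ref{fundthmPerc}. Define
\[
\phi_n(\omega, x) = \lvert x_n \rvert, \qquad \psi_n(\omega, x) = -\log p_\omega^n(1, x_n).
\]
Both are nonnegative, and $\phi_1, \psi_1$ are bounded by constants ($\phi_1 \le 1$ and $\psi_1 \le \log d$, where $d = \lvert S \rvert$). For the cocycle inequalities, the triangle inequality gives $\phi_{m+n} \le \phi_m + \phi_n \circ T^m$, while Chapman--Kolmogorov together with the equivariance $p_\omega^n(x_m, x_{m+n}) = p_{x_m^{-1}\omega}^n(1, x_m^{-1} x_{m+n})$ yields
\[
\psi_{m+n}(\omega, x) \le -\log p_\omega^m(1, x_m) - \log p_{x_m^{-1}\omega}^n(1, x_m^{-1} x_{m+n}) = \psi_m(\omega, x) + \psi_n(T^m(\omega, x)).
\]
Here one has to note that $x_m^{-1}\omega \in \Omega_1$ whenever $\omega \in \Omega_1$ and $x_m \in C_\omega(1)$, which follows from the $\Gamma$-equivariant identification of clusters. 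Kingman's theorem then produces almost-sure limits of $\phi_n/n$ and $\psi_n/n$, and ergodicity of $T$ forces them to be $\lambda$-almost surely constant, equal to $l$ and $h$, respectively. Disintegrating $\lambda$ via the density $d\mu'/d\mu$ (which is bounded away from $0$ and $\infty$ on $\Omega_1$) transfers these limits to $\mu$-almost every $\omega$ and $\mathbb{P}_\omega$-almost every path, which is exactly regularity in the sense of the definition.

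With (i) and (ii) in hand, Theorem \ref{genupperbound} applies to $X_\omega$ for $\mu$-almost every $\omega \in \Omega_1$ and gives
\[
\limsup_{r \to 0} \frac{\log \nu_\omega(B(\xi, r))}{\log r} \le \frac{h}{l}
\]
for $\nu_\omega$-almost every $\xi$; the conclusion $\dim \nu_\omega \le h/l$ then follows from Lemma \ref{Frostman}. No step is genuinely hard here: the only mild subtlety is the equivariance check in the subadditivity of $\psi_n$, which relies on the fact that the simple-random-walk kernel is intrinsically attached to the cluster $C_\omega(1)$ and therefore transforms correctly under the shift $\omega \mapsto g^{-1}\omega$ whenever $g \in C_\omega(1)$.
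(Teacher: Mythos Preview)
Your proposal is correct and follows essentially the same route as the paper: apply Theorem \ref{genupperbound} once one knows $X_\omega$ is regular with positive speed. The paper's proof is the one-liner ``This follows from Theorem \ref{genupperbound} and the assumption that the speed is positive,'' with regularity taken for granted from the remark after Theorem \ref{fundthmPerc} (Kingman on the ergodic path bundle); you have simply spelled out that regularity argument in full, which is fine.
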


\begin{proof}
    This follows from Theorem \ref{genupperbound} and the assumption that the speed is positive. 
\end{proof}

\begin{theorem}\label{lowerboundPerc}
     Let $\mu$ be an ergodic $\Gamma$-invariant percolation on $G$ with indistinguishable infinite clusters. Assume that the simple random walk on $C_{\omega}(1)$ has positive speed. Then, for every $\epsilon > 0$, the subset $D_{\epsilon}$ of $\Xi$ defined by
    \[
    D_{\epsilon} := \biggl\{(\omega, \xi) \in \Omega_{1} \times \partial G \colon \liminf_{r \to 0} \frac{\log \nu_{\omega}(B(\xi, r))}{\log r} \ge \frac{h}{l} - \epsilon \biggr\}.  
    \]
    is $\eta$-positive, where $\eta = \int_{\Omega_{1}} \delta_{\omega} \otimes \nu_{\omega} d\mu'(\omega)$ is the probability measure defined in Definition \ref{def57}.  
\end{theorem}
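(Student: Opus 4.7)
The plan is to reduce the statement to the general single-chain lower bound (Theorem \ref{genlowerbound}) applied slice by slice in $\omega$, and then to integrate the resulting positivity statements against the base measure $\mu'$. This is the same strategy as the one-line argument used for Theorem \ref{RCMlowerbound}, but with the ergodic-theoretic input for the $\omega$-dependence now coming from Theorem \ref{fundthmPerc} rather than from a genuine $\Gamma$-action on the environment space.

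First, I would verify that Theorem \ref{genlowerbound} can be applied fibrewise. For $\mu$-almost every $\omega \in \Omega_{1}$, the simple random walk $X_{\omega}$ on $C_{\omega}(1)$ is regular with entropy and speed $h, l$ that do not depend on $\omega$: this is obtained by applying Kingman's subadditive ergodic theorem to the functionals $\phi_{n}(\omega, x) = |x_{n}|$ and $\psi_{n}(\omega, x) = -\log p^{n}_{\omega}(1, x_{n})$ on the path bundle $(\Pi, \lambda, T)$, exactly as in the proof of Theorem \ref{entspRCM}, using ergodicity of $(\Pi, \lambda, T)$ from Theorem \ref{fundthmPerc}. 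Positivity of $l$ (and hence of $h$) is part of the hypothesis. For any such $\omega$, Theorem \ref{genlowerbound} applied to $X_{\omega}$ yields that the fibre
\[
D_{\epsilon, \omega} := \biggl\{ \xi \in \partial G \colon \liminf_{r \to 0} \frac{\log \nu_{\omega}(B(\xi, r))}{\log r} \ge \frac{h}{l} - \epsilon \biggr\}
\]
satisfies $\nu_{\omega}(D_{\epsilon, \omega}) > 0$.

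Next, I would observe that $D_{\epsilon}$ is exactly the union of these fibres over $\omega \in \Omega_{1}$, i.e. $D_{\epsilon} \cap (\{\omega\} \times \partial G) = \{\omega\} \times D_{\epsilon, \omega}$. Disintegrating $\eta$ as $\eta = \int_{\Omega_{1}} \delta_{\omega} \otimes \nu_{\omega}\, d\mu'(\omega)$ then gives
\[
\eta(D_{\epsilon}) = \int_{\Omega_{1}} \nu_{\omega}(D_{\epsilon, \omega}) \, d\mu'(\omega).
\]
Since the integrand is strictly positive for $\mu'$-almost every $\omega$, we conclude $\eta(D_{\epsilon}) > 0$.

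The only non-routine point is joint measurability: one needs the function $\omega \mapsto \nu_{\omega}(D_{\epsilon, \omega})$ to be $\mu'$-measurable, which reduces to joint measurability of $(\omega, \xi, r) \mapsto \nu_{\omega}(B(\xi, r))$. This follows because $\omega \mapsto \mathbb{P}_{\omega}$ is a Borel-measurable family of probability measures on $\Gamma^{\Z_{+}}$ (by construction), the map $\bnd \colon \Gamma^{\Z_{+}} \to \partial G$ is Borel (Theorem \ref{convtoboundaary}), and hence $\omega \mapsto \nu_{\omega} = \bnd_{*}\mathbb{P}_{\omega}$ is a Borel-measurable family on $\partial G$; the ball-function $(\xi, r) \mapsto B(\xi, r)$ is also Borel, so a standard monotone-class argument gives the required joint measurability. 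I expect this bookkeeping to be the only technical point of the proof; the substantive content is entirely carried by Theorem \ref{genlowerbound} and the fibrewise constancy of $h, l$.
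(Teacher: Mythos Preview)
Your proposal is correct and follows the same approach as the paper's proof, which is the single sentence ``This follows from Theorem~\ref{genlowerbound} and the assumption that the speed is positive.'' You have simply unpacked what that sentence means: the fibrewise application of Theorem~\ref{genlowerbound} (using regularity and constancy of $h,l$, which the paper records just after Theorem~\ref{fundthmPerc}), the disintegration of $\eta$, and the routine measurability check that the paper leaves implicit.
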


\begin{proof}
    This follows from Theorem \ref{genlowerbound} and the assumption that the speed is positive.
\end{proof}

In the proof of Theorem 3.1 in \cite{MR3893268}, the ergodicity of the $\Gamma$-action on $\partial G$ is used to prove that $D_{\epsilon}$ is actually full measure. We can modify the argument by using ergodicity of the equivalence relation $\mathcal{R}_{1}$ defined in Lemma \ref{relationerg}.

\begin{proof}[Proof of Theorem \ref{mainPerc}]
   Let us show that the set $D_{\epsilon}$ in Theorem \ref{lowerboundPerc} is $\eta$-conull for every $\epsilon > 0$. It is enough to prove that $D_{\epsilon}$ is $\mathcal{R}_{1}$-invariant since $\mathcal{R}_{1}$ is ergodic with respect to $\eta$ (Lemma \ref{relationerg}) and $D_{\epsilon}$ is $\eta$-positive (Theorem \ref{lowerboundPerc}). Let $((\omega, \xi), (g\omega, g\xi))$ be an element in $\mathcal{R}_{1}|_{\Omega_{1}}$ and $(\omega, \xi) \in D_{\epsilon}$. For $g \in \Gamma$, let $\mathbb{P}_{\omega, g}$ denotes the distribution of sample paths starting from $g$ following $P_{\omega}$. Then, for every Borel subset $A \subset \partial G$, we have 
    \begin{align*}
    (g_{*}\nu_{\omega})(A) 
    &= \mathbb{P}_{\omega}(\{ x \in \Gamma^{\Z_{+}} \colon \xi_{x} \in g^{-1}A\}) \ge \mathbb{P}_{\omega}(\bigcup_{n \in \Z_{+}} \{x_{n} = g^{-1} \} \cap \{ x \in \Gamma^{\Z_{+}} \colon \xi_{x} \in g^{-1}A\}) \\
    &= \mathbb{P}_{\omega}(\bigcup_{n \in \Z_{+}} \{x_{n} = g^{-1} \}) \mathbb{P}_{\omega, g^{-1}}(\{ x \in \Gamma^{\Z_{+}} \colon \xi_{x} \in g^{-1}A\}) \\
    &=  \mathbb{P}_{\omega}(\bigcup_{n \in \Z_{+}} \{x_{n} = g^{-1} \}) \mathbb{P}_{g\omega}(\{ x \in \Gamma^{\Z_{+}} \colon \xi_{x} \in A \})  
    = \mathbb{P}_{\omega}(\bigcup_{n \in \Z_{+}} \{x_{n} = g^{-1} \}) \nu_{g\omega}(A),
    \end{align*}
    where we have used the strong Markov property of $X_{\omega}$ in the second equality. The third equality follows from the equality $\mathbb{P}_{g\omega} = g_{*}\mathbb{P}_{\omega, g^{-1}}$. Note that $\mathbb{P}_{\omega}(\bigcup_{n \in \Z_{+}} \{x_{n} = g^{-1} \}) > 0$
    since $(\omega, g\omega) \in \mathcal{R}^{cl}$.
    Further, by Lemma \ref{gammaball}, we have $\nu_{g\omega}(B(g\xi,r)) 
    \le \nu_{g\omega}(gB(\xi,c_{g}r))$ where $c_{g}$ is a constant depending only on $g$. 
    Then, we have
    \begin{align*}
    \liminf_{r \to 0} \frac{\log \nu_{g\omega}(B(g\xi, r))}{\log r} 
    &\ge \liminf_{r \to 0} \frac{\log (g_{*}\nu_{\omega})(gB(\xi,c_{g}r)) - \log \mathbb{P}_{\omega}(\bigcup_{n \in \Z_{+}} \{ x_{n} = g^{-1} \}) }{\log r} \\
    &= \liminf_{r \to 0} \frac{\log \nu_{\omega}(B(\xi,c_{g}r))}{\log r} = \liminf_{r \to 0} \frac{\log \nu_{\omega}(B(\xi, r))}{\log r} \\
    &\ge \frac{h}{l} -\epsilon
    \end{align*}
    from the above estimates and $(\omega, \xi) \in D_{\epsilon}$. Hence, it follows that $(g\omega, g\xi) \in D_{\epsilon}$, as required.
\end{proof}

\section{Application to hyperbolic tilings} 

Let $\mathbb{H}^{2}$ be the hyperbolic plane and $o$ be its base point. For the pair of positive integers $P$ and $Q$ with $1/P + 1/Q < 1/2$, let $T_{P, Q}$ be the regular tiling of $\mathbb{H}^{2}$ by $P$-gons with interior angles $2\pi/Q$, and $(\Gamma_{P, Q}, S_{P, Q})$ be the pair of the cocompact Fuchsian group and its generating set corresponding to $T_{P, Q}$. Let $G_{P, Q}$ be the Cayley graph of $(\Gamma_{P, Q}, S_{P, Q})$. In this section, we always consider the metric $d_{\mathbb{H}}$ on $G_{P, Q}$ induced from the standard hyperbolic metric on $\mathbb{H}^{2}$ by identifying $g$ with $go$ for $g \in \Gamma_{P, Q}$. \\
\indent In \cite{MR4243517}, Carrasco, Lessa, and Paquette gave a lower bound for the speed of the simple random walks in Bernoulli percolation clusters on $G_{P, Q}$, with respect to $d_{\mathbb{H}}$. 

\begin{theorem}[Theorem 3.3 in \cite{MR4243517}]\label{CLP}
    Let $\mu_{p, P, Q}$ be a supercritical Bernoulli percolation on $G_{P, Q}$ with $p \in [0, 1]$, and $l_{p, P, Q}$ be the speed of the simple random walks determined by $\mu_{p, P, Q}$, with respect to $d_{\mathbb{H}}$. Then we have
    \[
    l_{p, P, Q} \ge 2\log Q - \frac{1}{p} O(\log(\log Q))
    \]
    when $Q \to \infty$, uniformly in $P$.
\end{theorem}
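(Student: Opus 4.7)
The plan is to exploit the fact that, for large $Q$, both the hyperbolic geometry of individual tiles and the combinatorial structure of $G_{P,Q}$ become very tree-like, so the walker travels nearly a fixed hyperbolic distance per step with little backtracking. First, each edge of $G_{P,Q}$ joins the centers of two tiles sharing a side of $T_{P,Q}$, so its $d_{\mathbb{H}}$-length equals twice the inradius $r_{P,Q}$ of a regular hyperbolic $P$-gon with interior angle $2\pi/Q$. Standard hyperbolic trigonometry yields $\cosh r_{P,Q} = \cot(\pi/P)\cot(\pi/Q)$, hence $r_{P,Q} = \log Q + O(1)$ as $Q \to \infty$, uniformly in $P \ge 3$. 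Every edge thus has common hyperbolic length $\ell_{P,Q} = 2\log Q + O(1)$, which gives the natural target $2\log Q$ for the speed.

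Second, I would argue that the simple random walk in the cluster is effectively non-backtracking at the combinatorial scale. Since the girth of $G_{P,Q}$ equals $Q$, every combinatorial ball of radius strictly less than $Q/2$ is a finite portion of the $P$-regular tree, and on such a tree the simple random walk in a supercritical Bernoulli cluster has a classical explicit positive drift towards infinity. Coupling the walk in $G_{P,Q}$ to its tree analogue up to the first exit from such a ball and iterating then yields a preliminary lower bound on $l_{p,P,Q}$ that already captures the correct leading order $2\log Q$. Sharpening the error term down to $(1/p) O(\log \log Q)$ requires a more delicate analysis of excursions around short cycles: one shows that a typical excursion costs only $O(\log \log Q)$ of hyperbolic displacement, and uses concentration for the number of such excursions along a path of length $n$, together with the supercriticality of $p$, to control their aggregate contribution.

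Finally, the speed itself exists and is deterministic by Theorem \ref{BLS}, and Kingman's subadditive ergodic theorem applied to the subadditive cocycle $(\omega, x) \mapsto d_{\mathbb{H}}(1, x_n)$ on the path bundle $\Pi$ (cf.\ Theorem \ref{fundthmPerc}) promotes the pointwise lower bound on partial displacements into the asserted inequality for $l_{p,P,Q}$. The main obstacle is precisely the second step: obtaining the correct $(1/p) O(\log \log Q)$ correction requires quantifying, with the proper dependence on $p$, both the frequency of encounters with short cycles of length $\sim Q$ and the hyperbolic cost of each resulting excursion. This forces one to couple the supercriticality of Bernoulli percolation with the exponential volume growth of hyperbolic space in a way that avoids losing the uniformity in $P$.
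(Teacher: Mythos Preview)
The paper does not prove this theorem at all: it is quoted verbatim as Theorem 3.3 of \cite{MR4243517} (Carrasco--Lessa--Paquette) and used as a black box in the proof of Theorem~\ref{maintiling}. There is therefore no ``paper's own proof'' to compare your proposal against.

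As for the proposal itself, it is not a proof but an outline with an explicitly acknowledged gap. Your first step (the inradius computation giving edge length $2\log Q + O(1)$) is correct and standard. Your second step, however, is where all the content lies, and you concede this yourself: you say the $(1/p)O(\log\log Q)$ correction ``requires a more delicate analysis of excursions around short cycles'' and that ``the main obstacle is precisely the second step.'' What you have written there is a description of what a proof would need to accomplish, not an argument that accomplishes it. In particular, the actual proof in \cite{MR4243517} does not proceed via the tree-coupling and excursion-counting heuristic you sketch; it uses their general machinery of distance-stationary sequences together with a Furstenberg-type formula for the speed, and the $\log\log Q$ term arises from a specific entropy estimate rather than from bounding cycle excursions. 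If you want to supply an independent proof, you would need to actually carry out the quantitative excursion analysis you describe, with the correct dependence on $p$ and uniformity in $P$; as written, that part is a wish list rather than an argument.
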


Combining this theorem with our result, we obtain the dimension drop on $G_{P, Q}$ for all large $Q$.

\begin{theorem}\label{maintiling}
    Let $\mu_{p, P, Q}$ be a supercritical Bernoulli percolation on $G_{P, Q}$ with $p \in [0, 1]$. Then, for $\mu_{p, P, Q}$-almost every $\omega \in \Omega_{1}$, letting $\nu_{\omega}$ be the harmonic measure associated with the simple random walk on $C_{\omega}(1)$ starting from 1, we have
    \[
    \dim \nu_{\omega} = \lim_{r \to 0} \frac{\log \nu_{\omega}(B(\xi, r))}{\log r} = \frac{h}{l}
    \]
    for $\nu_{\omega}$-almost every $\xi$. Further we have 
    \[
    \limsup_{Q \to \infty} \delta_{p, P, Q} \le \frac{1}{2}
    \]
    uniformly in $P$, where $\delta_{p,P,Q}$ denotes the dimension of $\nu_{\omega}$ for $\nu_{p,P,Q}$-almost every $\omega \in \Omega_{1}$.
\end{theorem}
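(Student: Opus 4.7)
The plan is to split the argument into two parts: the dimension formula with respect to $d_{\mathbb{H}}$, and the quantitative $\limsup$ estimate for $\delta_{p,P,Q}$. For the identity $\dim \nu_\omega = h/l$, I would apply the Corollary to Theorem \ref{mainPerc} in the supercritical Bernoulli case, formulated with $d_{\mathbb{H}}$ in place of the graph metric. The Cayley graph $(G_{P,Q}, d_{\mathbb{H}})$ inherits $\delta$-hyperbolicity and properness from $\mathbb{H}^2$, and $\Gamma_{P,Q}$ acts on it properly cocompactly by isometries via the orbit identification $g \leftrightarrow go$. Inspection of the proofs of Theorems \ref{genupperbound}, \ref{genlowerbound}, \ref{fundthmPerc}, and \ref{mainPerc} shows that they rely only on $\delta$-hyperbolicity, the Gromov-boundary shadow--ball machinery (Lemmas \ref{gammaball} and \ref{shadowball}), geodesic tracking (Theorem \ref{convtoboundaary}), and the cluster-relation ergodic theory; none of these require $d_{\mathbb{H}}$ to be the graph metric. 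They therefore carry over verbatim, yielding exact dimensionality together with $\delta_{p,P,Q} = h/l_{p,P,Q}$, where $l_{p,P,Q}$ denotes the speed measured in $d_{\mathbb{H}}$.

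For the quantitative bound, I would combine the speed lower bound of Theorem \ref{CLP}, $l_{p,P,Q} \geq 2\log Q - \frac{1}{p} O(\log \log Q)$ (uniform in $P$), with a matching asymptotic upper bound on $h$. The elementary inequality
\[
-\log p_\omega^n(1, X_n) \leq \sum_{k=1}^n \log \deg_{C_\omega(1)}(X_{k-1}),
\]
valid because $p_\omega^n(1, X_n)$ dominates the probability of the realized trajectory, combined with the ergodic theorem on the path bundle (Theorem \ref{fundthmPerc}), yields $h \leq E_{\mu'}[\log \deg_{C_\omega(1)}(1)]$. Granted an asymptotic estimate $h \leq (1+o(1)) \log Q$ uniform in $P$, one obtains
\[
\delta_{p,P,Q} = \frac{h}{l_{p,P,Q}} \leq \frac{(1+o(1))\log Q}{2\log Q - O(\log \log Q)/p} \longrightarrow \frac{1}{2} \quad \text{as } Q \to \infty,
\]
uniformly in $P$.

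The hyperbolic-metric adaptation in the first step is essentially mechanical, and the final combination of bounds is a one-line calculation, so the main obstacle is the uniform-in-$P$ entropy bound $h \leq (1+o(1)) \log Q$. I would approach this by refining the ergodic-average degree estimate using the geometry of the $(P,Q)$-tiling: the key observation is that although the degree of $G_{P,Q}$ itself depends on $P$, the effective entropy per step of the simple random walk on a Bernoulli cluster is governed by the vertex complexity $Q$ of the tiling rather than $P$, which should allow one to squeeze $E_{\mu'}[\log \deg_{C_\omega(1)}(1)]$ against $\log Q$ uniformly.
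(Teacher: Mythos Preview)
Your treatment of the first part---running the proof of Theorem \ref{mainPerc} with $d_{\mathbb{H}}$ in place of the word metric---matches the paper exactly.

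For the second part you have correctly isolated the two ingredients (the speed lower bound of Theorem \ref{CLP} and an upper bound on the entropy) and the final one-line division, but you have misidentified where the work lies. In the paper's setup $G_{P,Q}$ is $Q$-regular, so $\deg_{C_\omega(1)}(1)\le Q$ for every $\omega$, and your own inequality $h\le E_{\mu'}\bigl[\log\deg_{C_\omega(1)}(1)\bigr]$ (or, even more directly, the chain-rule bound $H(X_n)\le n\log Q$) already yields $h_{p,P,Q}\le\log Q$ with no further argument. This is precisely what the paper invokes as ``the trivial bound $h_{p,P,Q}\le\log Q$,'' and with it the estimate $\delta_{p,P,Q}\le \log Q\big/\bigl(2\log Q - O(\log\log Q)/p\bigr)\to 1/2$ is immediate and uniform in $P$. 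Your stated ``main obstacle'' rests on the assumption that the degree of $G_{P,Q}$ depends on $P$; in the relevant graph it does not, so the refinement you sketch---squeezing the expected log-degree against $\log Q$ via the tiling geometry---is unnecessary.
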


\begin{proof}
    The first part of the theorem can be shown by the same argument as in the proof of Theorem \ref{mainPerc} (just replacing the word metric with $d_{\mathbb{H}}$). Further, we obtain the inequality from Theorem \ref{CLP} and the trivial bound $h_{p, P, Q} \le \log Q$. 
\end{proof}

\section{Questions and Remarks}

In this section, we propose some natural questions concerning our results and related works. First, we review dimension drop phenomena. In various settings, it has been shown that the dimension of the harmonic measure associated with a random walk is strictly smaller than the dimension of the boundary (see \cite{MR1336708}, \cite{MR1832436}, \cite{MR3763407}, \cite{MR4230410}, \cite{MR4243517}, and \cite{MR4514005}). Such a phenomenon is called \textit{dimension drop}. In the context of random walks in percolation clusters, we can consider the stronger version of dimension drop phenomena.

\begin{question}
    Let $\mu$ a $\Gamma$-invariant percolation on $G$ as in Theorem \ref{mainPerc}. When is the inequality 
    \[
    \dim \nu_{\omega} \le \dim_{\rho} \Lambda(C_{\omega}(1))
    \]
    strict for $\mu$-almost every $\omega$? Here $\Lambda(C_{\omega}(1))$ denotes the \textit{limit set} of $C_{\omega}(1)$, i.e., $\overline{C_{\omega}(1)}^{G \cup \partial G} \backslash G$. We can also consider the analogous question in the setting of hyperbolic tilings.
\end{question}

Note that Theorem \ref{maintiling} can be seen as a partial answer to this question in the case of Bernoulli percolations on hyperbolic tilings.  \\

\indent Next, we focus on Bernoulli percolations. In \cite{MR1873136}, Lalley investigated the behavior of the dimension of the limit sets of percolation clusters of $\mu_{p}$ when the parameter $p$ varies, and obtained the monotonicity and the continuity. We can consider the following analogue of his results:

\begin{question}
    Let $\mu_{p}$ be a supercritical Bernoulli percolation on $G$ and $\delta_{p}$ be the dimension of harmonic measures associated with $\mu_{p}$. Is the function $p \mapsto \delta_{p}$ continuous or monotone increasing? 
\end{question}

By Theorem \ref{mainPerc}, the continuity of the dimension can be reduced to that of the entropy and the speed. I do not know anything about the monotonicity of them. Note that the monotonicity of the entropy was conjectured in \cite{MR1802426} and it still remains open. See also a recent paper by Lyons and White \cite{MR4583064}.

\appendix

\section{Proof of Theorem \ref{fundthmPerc}}

In this appendix, we give the proof of Theorem \ref{fundthmPerc}. 
Similar arguments work for Theorem \ref{fundthmRCM}. \\
\indent Before the proof, we first define the bilateral version $(\Pi_{\Z}, \lambda_{\Z}, U)$ of the system $(\Pi, \lambda, T)$ by 
\begin{itemize}
    \item $\Pi_{\Z} = \Omega_{1} \times \Gamma^{\Z}$
    \item $\lambda_{\Z} = \int_{\Omega_{1}} \delta_{\omega} \otimes \mathbb{P}^{\Z}_{\omega} d\mu'(\omega)$
    \item $U(\omega, (x_{n})_{n \in \Z}) = (x_{1}^{-1}\omega, (x_{1}^{-1}x_{n+1})_{n \in \Z})$,
\end{itemize}
    where $\mathbb{P}^{\Z}_{\omega}$ denotes the distribution of bilateral sample paths following $P_{\omega}$ starting from 1. 

\begin{proof}[Proof of Theorem \ref{fundthmPerc}]
    It is enough to show that the bilateral version is measure-preserving and ergodic since the natural projection from the bilateral version to $(\Pi, \lambda, T)$ is a factor map commuting with the shift maps. \\
    \indent First, we show that the bilateral system is measure-preserving. Let $B$ be a Borel subset of $\Pi$.
    Then, we have
    \begin{align*}
    \lambda(U^{-1}B)  
    &= \frac{1}{D} \int_{\Omega_{1}} \deg_{C_{\omega}(1)}(1) \mathbb{P}^{\Z}_{\omega}(\{x \in \Gamma^{\Z} \colon (\omega, x) \in U^{-1}B\} d\mu(\omega) \\
    &= \frac{1}{D} \sum_{s \in S} \int_{\Omega_{1}} \omega(1, s) \mathbb{P}^{\Z}_{\omega}(\{x \in \Gamma^{\Z} \colon (s^{-1}\omega, (s^{-1}x_{n+1})_{n \in \Z}) \in B\} | \{x_{1} =s \}) d\mu(\omega) \\
    &= \frac{1}{D} \sum_{s \in S} \int_{\Omega_{1}} (s^{-1}\omega)(1, s^{-1}) \mathbb{P}^{\Z}_{s^{-1}\omega}(\{x \in \Gamma^{\Z} \colon (s^{-1}\omega, x) \in B\} | \{x_{-1} =s^{-1} \}) d\mu(\omega) \\
    &= \frac{1}{D} \sum_{s \in S} \int_{\Omega_{1}} \omega(1, s^{-1}) \mathbb{P}^{\Z}_{\omega}(\{x \in \Gamma^{\Z} \colon (\omega, x) \in B \}|\{x_{-1} = s^{-1} \}) d\mu(\omega) \\
    &= \frac{1}{D} \int_{\Omega_{1}} \deg_{C_{\omega}(1)}(1) \mathbb{P}^{\Z}_{\omega}(\{x \in \Gamma^{\Z} \colon (\omega, x) \in B \}) d\mu(\omega)
    = \lambda(B),
    \end{align*} 
    where the fourth equality follows from the $\Gamma$-invariance of $\mu$ and the fifth equality follows from $S = S^{-1}$. 
    This completes the proof that $U$ is measure-preserving. \\
    \indent We prove that the bilateral system is ergodic in the rest of the proof, following Section 2 in \cite{MR2278453}. For each Borel subset $B \subset \Pi_{\Z}$, we define a function $f_{B}$ on $\Omega_{1}$ by $f_{B}(\omega) = \mathbb{P}^{\Z}_{\omega}(\{x \in \Gamma^{\Z} \colon (\omega, x) \in B\})$. Let $A$ be a $U$-invariant subset of $\Pi_{\Z}$. We first show that 
    \[
    f_{A}(\omega) \in \{0, 1\}
    \]
    for $\mu'$-almost every $\omega \in \Omega_{1}$. For $\epsilon > 0$, there exist $N \in \N$ and a Borel subset $A_{N}$ of $\Pi_{\Z}$ such that 
    \begin{itemize}
        \item $\lVert f_{A} - f_{A_{N}} \rVert_{1}$ = $\lambda_{\Z}(A \, \triangle \, A_{N}) < \epsilon$, and
        \item $A_{N}$ is an event depending only on $(\omega, (x_{n})_{-N \le n \le N})$,
    \end{itemize}
    where $\lVert \cdot \rVert_{1}$ denotes the norm on $L^{1}_{\mu'}(\Omega_{1})$. Since $A$ is $U$-invariant, we have 
    \begin{align*}
        \lVert f_{A} - f_{A}^{2} \rVert_{1} 
        &= \lVert f_{A} - f_{U^{N}A} f_{U^{-N}A} \rVert_{1} \le \lVert f_{A} - f_{U^{N}A_{N}} f_{U^{-N}A_{N}} \rVert_{1} + 2\epsilon \\
        &\le \lVert f_{A} - f_{U^{N}A_{N} \cap U^{-N}A_{N}} \rVert_{1} + 2\epsilon \\
        &\le \lVert f_{A} - f_{U^{N}A \cap U^{-N}A}\rVert_{1} + 4\epsilon = \lVert f_{A} - f_{A}\rVert_{1} + 4\epsilon = 4\epsilon.
    \end{align*}
    Note that the second inequality follows from the fact that the events $U^{N}A_{N}$ and $U^{-N}A_{N}$ are independent, conditional on $\omega$. Since $\epsilon$ can be taken arbitrarily, the above estimate implies that $f_{A}(\omega) \in \{0, 1\}$ for $\mu'$-almost every $\omega \in \Omega_{1}$. 
    Therefore, it is enough to show that the subset $F$ of $\Omega_{1}$ defined by 
    \[
    F = \{ \omega \in \Omega_{1} \colon f_{A}(\omega) = 1 \}
    \]
    is $\mathcal{R}^{cl}$-invariant since $\mathcal{R}^{cl}|_{\Omega_{1}}$ is ergodic with respect to $\mu'$. We want to show that $s^{-1}\omega \in F$ for $\omega \in F$ and $s \in S$ satisfying $\omega(1, s) = 1$. Since $\omega \in F$, we have
    \begin{align*}
        f_{A}(s^{-1}\omega) 
        &= \mathbb{P}^{\Z}_{s^{-1}\omega}(\{x \in \Gamma^{\Z} \colon (s^{-1}\omega, x) \in A \}) 
        = s^{-1}_{*}\mathbb{P}^{\Z}_{\omega, s}(\{x \in \Gamma^{\Z} \colon (s^{-1}\omega, x) \in A \}) \\
        &= \mathbb{P}^{\Z}_{\omega, s}(\{x \in \Gamma^{\Z} \colon (s^{-1}\omega, s^{-1}x) \in A \}) \ge \mathbb{P}^{\Z}_{\omega}(\{x \in \Gamma^{\Z} \colon (\omega, x) \in A, \, x_{1} = s \}) \\
        &= \mathbb{P}^{\Z}_{\omega}(\{x \in \Gamma^{\Z} \colon x_{1} = s \}) > 0,
    \end{align*}
    where $\mathbb{P}^{\Z}_{\omega, s}$ denotes the distribution of bilateral sample paths following $P_{\omega}$ starting from $s$. Note that we have used the $U$-invariance of $A$ and the strong Markov property of $X_{\omega}$ in the first inequality. As $f_{A}(\omega) \in \{0, 1\}$, this implies $f_{A}(s^{-1}\omega) = 1$, and hence completes the proof.
\end{proof}

\bibliographystyle{amsalpha}
\bibliography{exactdim}
\end{document}